\documentclass[a4paper]{article}

\usepackage{amsmath,amsthm,amssymb}
\usepackage{graphicx,subcaption, tikz}
\usepackage{comment,xspace,paralist}							                              
\usepackage[shortlabels]{enumitem}	
\usepackage{hyperref}       
\hypersetup{
    colorlinks=true,
    citecolor = blue,
    linkcolor=black,
    filecolor=magenta,
    urlcolor=cyan,
    bookmarks=true,
}
\usepackage{times}
\usepackage{todonotes}
\usepackage[ruled,linesnumbered]{algorithm2e}
\newtheorem{theorem}{Theorem}
\newtheorem{lemma}{Lemma}
\newtheorem{claim}{Claim}

\title{{\bf  $k$-Critical Graphs in $P_5$-Free Graphs}}

\author{
Kathie Cameron\thanks{Department of Mathematics, Wilfrid Laurier University, Waterloo, ON, Canada N2L 3C5. Email: \texttt{kcameron@wlu.ca}.
Research supported by the Natural Sciences and Engineering Research Council of Canada (NSERC) grant RGPIN-2016-06517.}
\and
Jan Goedgebeur \thanks{Department of Applied Mathematics, Computer Science and Statistics, Ghent University, 9000 Ghent, Belgium.} \thanks{Computer Science Department, University of Mons, 7000 Mons, Belgium.} \thanks{Supported by a Postdoctoral Fellowship of the Research Foundation Flanders (FWO).} 
\and
Shenwei Huang\thanks{College of Computer Science, Nankai University, Tianjin 300350, China. Partially supported by the National Natural Science Foundation of China (11801284).}
\and
Yongtang Shi\thanks{Center for Combinatorics and LPMC, Nankai University, Tianjin 300071, China. Partially supported by the National Natural Science Foundation of China (Nos. 11771221, 11922112) and the Fundamental Research Funds for the Central Universities, Nankai University.}
}

\date{May 4, 2020}

\begin{document}

\maketitle

\begin{abstract} 
Given two graphs $H_1$ and $H_2$, a graph $G$ is $(H_1,H_2)$-free if it contains no induced subgraph isomorphic to $H_1$ or $H_2$. 
Let $P_t$ be the path on $t$ vertices.
A graph $G$ is $k$-vertex-critical if $G$ has chromatic number $k$ but every proper induced subgraph of $G$ has chromatic number less than $k$.
The study of $k$-vertex-critical graphs for graph classes is an important topic in algorithmic graph theory because
if the number of such graphs that are in a given hereditary graph class is finite, then there is a polynomial-time algorithm
to decide if a graph in the class is $(k-1)$-colorable. 

In this paper, we initiate a systematic study of the finiteness of $k$-vertex-critical graphs in subclasses of $P_5$-free graphs.
Our main result is a complete classification of the finiteness of $k$-vertex-critical graphs in the class of $(P_5,H)$-free graphs 
for all graphs $H$ on 4 vertices. To obtain the complete dichotomy, we prove the finiteness for four new graphs $H$ using
various techniques -- such as Ramsey-type arguments and the dual of Dilworth's Theorem -- that may be of independent interest.
\end{abstract}

\section{Introduction}

All graphs in this paper are finite and simple.
We say that a graph $G$ {\em contains} a graph $H$ if $H$ is
isomorphic to an induced subgraph of $G$.  A graph $G$ is
{\em $H$-free} if it does not contain $H$. 
For a family of graphs $\mathcal{H}$,
$G$ is {\em $\mathcal{H}$-free} if $G$ is $H$-free for every $H\in \mathcal{H}$.
When $\mathcal{H}$ consists of two graphs, we write
$(H_1,H_2)$-free instead of $\{H_1,H_2\}$-free.
As usual, $P_t$ and $C_s$ denote
the path on $t$ vertices and the cycle on $s$ vertices, respectively. The complete
graph on $n$ vertices is denoted by $K_n$.
The graph $K_3$ is also referred to as the {\em triangle}.
For two graphs $G$ and $H$, we use $G+H$ to denote the \emph{disjoint union} of $G$ and $H$.
For a positive integer $r$, we use $rG$ to denote the disjoint union of $r$ copies of $G$.
The \emph{complement} of $G$ is denoted by $\overline{G}$.
A {\em clique} (resp.\ {\em independent set}) in a graph is a set of pairwise adjacent (resp.\ nonadjacent) vertices.
If a graph $G$ can be partitioned into $k$ independent sets $S_1,\ldots,S_k$ such that there is an edge between every vertex in $S_i$
and every vertex in $S_j$ for all $1\le i<j\le k$, $G$ is called a {\em complete $k$-partite graph}; each $S_i$ is called a {\em part}
of $G$. If we do not specify the number of parts in $G$, we simply say that $G$ is a {\em complete multipartite graph}.
We denote by $K_{n_1,\ldots,n_k}$ the complete $k$-partite graph such that the $i$th part $S_i$ has size $n_i$, for each $1\le i\le k$.
A \emph{$q$-coloring} of a graph $G$ is a function $\phi:V(G)\longrightarrow \{ 1, \ldots ,q\}$ such that
$\phi(u)\neq \phi(v)$ whenever $u$ and $v$ are adjacent in $G$.
Equivalently, a $q$-coloring of $G$ is a partition of $V(G)$ into $q$ independent sets.
A graph is {\em $q$-colorable} if it admits a $q$-coloring.
The \emph{chromatic number} of a graph $G$, denoted by
$\chi (G)$, is the minimum number $q$ for which $G$ is $q$-colorable.
The \emph{clique number} of $G$, denoted by $\omega(G)$, is the size of a largest clique in $G$.

A graph $G$ is {\em $k$-chromatic} if $\chi(G)=k$. We say that $G$ is {\em $k$-critical} if it is
$k$-chromatic and $\chi(G-e)<\chi(G)$ for any edge $e\in E(G)$. For instance, $K_2$ is the only 2-critical graph
and odd cycles are the only 3-critical graphs. A graph is {\em critical} if it is $k$-critical for some integer $k\ge 1$.
Critical graphs were first defined and studied by Dirac~\cite{Di51,Di52,Di52i} in the early 1950s,
and then by Gallai and Ore~\cite{Ga63, Ga63i,Or67} among many others, and more recently by
Kostochka and Yancey~\cite{KY14}. 

A weaker notion of criticality is the so-called vertex-criticality.
A graph $G$ is {\em $k$-vertex-critical} if $\chi(G)=k$ and $\chi(G-v)<k$ for any $v\in V(G)$.
For a set $\mathcal{H}$ of graphs and a graph $G$, we say that $G$ is {\em $k$-vertex-critical $\mathcal{H}$-free}
if it is $k$-vertex-critical and $\mathcal{H}$-free.
We are mainly interested in the following question.

\noindent {\bf The meta question.} Given a set $\mathcal{H}$ of graphs and an integer $k\ge 1$, 
are there only finitely many $k$-vertex-critical  $\mathcal{H}$-free graphs?

This question is important in the study of algorithmic graph theory because of the following theorem.
\begin{theorem}[Folklore]\label{thm:finiteness}
Given a set $\mathcal{H}$ of graphs and an integer $k\ge 1$,
if the set of all $k$-vertex-critical $\mathcal{H}$-free graphs is finite, then there is a polynomial-time algorithm to determine
whether an $\mathcal{H}$-free graph is $(k-1)$-colorable.  
\end{theorem}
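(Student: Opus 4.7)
The plan is to show that, given the finite list $\mathcal{F}$ of all $k$-vertex-critical $\mathcal{H}$-free graphs, deciding $(k-1)$-colorability on an input $\mathcal{H}$-free graph $G$ reduces to checking whether $G$ contains any member of $\mathcal{F}$ as an induced subgraph.

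The key combinatorial fact I would establish first is the following: an $\mathcal{H}$-free graph $G$ fails to be $(k-1)$-colorable if and only if $G$ contains some $F \in \mathcal{F}$ as an induced subgraph. The ``if'' direction is immediate since the presence of an induced subgraph of chromatic number $k$ forces $\chi(G) \ge k$. For the ``only if'' direction, assume $\chi(G) \ge k$ and take an induced subgraph $H$ of $G$ of smallest order among those with $\chi(H) \ge k$. Then $\chi(H) = k$ (otherwise one could delete a vertex without dropping below $k$, contradicting the choice of $H$ as long as it has chromatic number strictly greater than $k$; this needs a brief sanity check), and by minimality $\chi(H - v) < k$ for every $v$, so $H$ is $k$-vertex-critical. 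Since being $\mathcal{H}$-free is hereditary, $H$ is also $\mathcal{H}$-free, hence $H \in \mathcal{F}$.

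Given this equivalence, the algorithm is direct: iterate over each $F \in \mathcal{F}$ and test whether $G$ contains $F$ as an induced subgraph by brute force, enumerating all $\binom{|V(G)|}{|V(F)|}$ subsets of vertices of $G$ of size $|V(F)|$ and checking if any induces a copy of $F$. Because $\mathcal{F}$ is finite, the quantity $c := \max_{F \in \mathcal{F}} |V(F)|$ is a constant, so each induced subgraph test runs in time $O(n^{c})$, and the whole procedure in time $O(|\mathcal{F}| \cdot n^{c})$, which is polynomial in $n = |V(G)|$.

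There is no real obstacle here beyond carefully articulating the minimal-subgraph argument and noting that $\mathcal{F}$ must be hard-coded into the algorithm (its size and the maximum order of its members enter the running time as constants, not as part of the input). I would present it as one short lemma establishing the characterization of non-$(k-1)$-colorability via induced containment of a member of $\mathcal{F}$, followed by one paragraph describing the brute-force subgraph-search algorithm and its polynomial running time.
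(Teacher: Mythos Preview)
The paper does not actually supply a proof of this theorem; it is stated as folklore and left unproved. Your argument is correct and is exactly the standard one: any graph with $\chi\ge k$ contains a $k$-vertex-critical induced subgraph (take a vertex-minimal induced subgraph with chromatic number at least $k$), hereditariness of $\mathcal{H}$-freeness places that subgraph in the finite list $\mathcal{F}$, and then brute-force induced-subgraph detection for each fixed $F\in\mathcal{F}$ runs in $O(n^{|V(F)|})$ time.

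One small stylistic point: your parenthetical justification that $\chi(H)=k$ is more roundabout than necessary. Once you have chosen $H$ of minimum order with $\chi(H)\ge k$, minimality already gives $\chi(H-v)\le k-1$ for every $v$, and since deleting a vertex lowers the chromatic number by at most one, $\chi(H)\le k$ follows immediately. You can therefore drop the separate ``otherwise delete a vertex'' argument and go straight to the conclusion that $H$ is $k$-vertex-critical.
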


In this paper, we study $k$-vertex-critical graphs in the class of $P_5$-free graphs. Our research is mainly motivated by the following
two results.

\begin{theorem}[\cite{HMRSV15}]\label{thm:P5infinite}
For any fixed $k\ge 5$, there are infinitely many $k$-vertex-critical $P_5$-free graphs.
\end{theorem}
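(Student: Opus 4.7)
The plan is to exhibit, for each fixed $k \geq 5$, an explicit infinite family of $k$-vertex-critical $P_5$-free graphs. The first step is a reduction to the case $k = 5$: given any 5-vertex-critical $P_5$-free graph $H$, we claim that $G = H \vee K_{k-5}$ (the join of $H$ with a clique on $k-5$ vertices) is a $k$-vertex-critical $P_5$-free graph. Indeed $\chi(G) = \chi(H) + (k-5) = k$; $G$ is $P_5$-free because no vertex of $K_{k-5}$ can appear in an induced $P_5$ (such a vertex would have degree at most $2$ in the $P_5$, yet is adjacent in $G$ to all four other vertices), so any induced $P_5$ of $G$ already lies in $H$; and $G$ is $k$-vertex-critical since removing a vertex of $K_{k-5}$ decreases $\chi$ by one, while removing a vertex $v$ of $H$ yields a graph of chromatic number $\chi(H - v) + (k - 5) \leq 4 + (k-5) = k - 1$. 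It therefore suffices to exhibit infinitely many 5-vertex-critical $P_5$-free graphs.

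For the base case $k = 5$, the plan is to find a parameterized construction $\{G_n\}_{n \geq 1}$ of $P_5$-free graphs with $\chi(G_n) = 5$ and $|V(G_n)| \to \infty$. A natural source of candidates is the family of blow-ups of $C_5$ (where each vertex of $C_5$ is replaced by an independent set or a clique), since these are $P_5$-free: any two vertices in the same bag are twins, whereas $P_5$ has no two vertices with identical neighborhoods. For each candidate family one must verify (i) $P_5$-freeness via a case analysis on how the five vertices of a putative induced $P_5$ are distributed across the bags, (ii) $\chi(G_n) = 5$ by exhibiting a valid 5-coloring together with a matching lower bound (for instance from the independence number $\alpha(G_n)$ or a fractional chromatic argument), and (iii) vertex-criticality by producing, for each vertex $v$, an explicit 4-coloring of $G_n - v$.

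The main obstacle will be (iii). The simplest $C_5$-based constructions fail vertex-criticality: $C_5[I_n]$ has chromatic number only $3$, while $C_5[K_2]$ is 5-chromatic but remains 5-chromatic after deletion of any single vertex, since the bags are too symmetric (one can force a contradiction while trying to 4-color the remaining graph using that two adjacent bags form a $K_4$, pinning down the color palette along the cycle). To force criticality, the construction must pin down each vertex's role in every optimal 5-coloring, which likely requires asymmetric bag sizes together with small auxiliary gadgets attached to rigidify the coloring structure. The plan is thus to design such a family carefully --- guided if necessary by computer search on small cases --- and then verify (i)--(iii) uniformly in $n$. A natural family to try first is a $C_5$-blow-up with carefully tuned non-uniform bag sizes together with additional near-universal vertices whose adjacencies force each remaining vertex into a unique color in every 5-coloring.
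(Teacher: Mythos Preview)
Your reduction from general $k\ge 5$ to the case $k=5$ via the join with $K_{k-5}$ is correct and is exactly what the paper does: in the proof of \autoref{thm:main} it observes that, since the graphs in the base family contain no universal vertex, adding $k-5$ universal vertices to a 5-vertex-critical $P_5$-free graph yields a $k$-vertex-critical $P_5$-free graph.

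The genuine gap is the base case. You have not produced any infinite family of 5-vertex-critical $P_5$-free graphs; you have only described a search strategy (``try $C_5$-blow-ups with asymmetric bag sizes and attached gadgets, possibly aided by computer search'') and acknowledged that the obvious candidates fail. That is not a proof: the entire content of the theorem for $k=5$ is the existence of such a family, and nothing in your write-up establishes it. Your own analysis already shows why this is the hard part --- uniform $C_5$-blow-ups are either too small in chromatic number or not vertex-critical --- and you give no argument that your proposed modifications can be made to work.

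For comparison, the paper does not prove \autoref{thm:P5infinite} either; it is quoted from \cite{HMRSV15}, where an explicit infinite family is constructed. The paper notes in passing (in the proof of \autoref{thm:main}) that the graphs in that family are in fact $2P_2$-free, which is strictly stronger than $P_5$-free. This also tells you that the eventual construction is rather different from what you are sketching: any $C_5$-blow-up with two nonadjacent bags each containing an edge already induces a $2P_2$, so the \cite{HMRSV15} family cannot arise from the template you propose. If you want to complete your approach you would need either to locate that explicit construction or to supply a different one together with full verifications of (i)--(iii).
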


\begin{theorem}[\cite{BHS09, MM12}]\label{thm:P5finite}
There are exactly $12$ 4-vertex-critical $P_5$-free graphs.
\end{theorem}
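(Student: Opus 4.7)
My plan is to show that every 4-vertex-critical $P_5$-free graph has bounded order, and then to enumerate all such graphs exhaustively. The order bound reduces, via Chv\'atal's Ramsey-number formula $R(P_5, K_s) = 4(s-1)+1$, to bounding the independence number $\alpha(G)$: any $P_5$-free graph with no independent set of size $\alpha+1$ has fewer than $4\alpha+1$ vertices.

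As a preliminary step, note that since $K_4$ is itself 4-vertex-critical, any 4-vertex-critical graph that properly contains $K_4$ would violate vertex-criticality. Hence either $G=K_4$, or $\omega(G)\le 3$, and we may reduce to the latter case.

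The main step is bounding $\alpha(G)$ under the assumption $\omega(G)\le 3$. Let $I$ be a maximum independent set of $G$ and fix a 3-coloring $\phi$ of $G-I$ (which exists by vertex-criticality), with color classes $C_1,C_2,C_3$. Because $\phi$ does not extend to a 3-coloring of $G$, every $v\in I$ has a neighbor in each $C_i$. Combining this with $P_5$-freeness and $K_4$-freeness, I would argue by case analysis that $|I|$ must be small. A natural structural tool is the Bacs\'o--Tuza theorem that every connected $P_5$-free graph has a dominating clique or a dominating induced $P_3$; this furnishes a ``hub'' $D$ of at most three vertices around which to partition $V(G)\setminus D$ by adjacency pattern to $D$, and then to analyze how each $v\in I$ interacts with the color classes $C_i$ without creating an induced $P_5$.

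Once $|V(G)|$ is bounded by an explicit constant, a computer-assisted search through $P_5$-free graphs up to that size, filtered by $\chi(G)=4$ and vertex-criticality, yields the final list of 12 graphs. The hardest part is the bound on $\alpha(G)$: the interplay between $P_5$-freeness and 4-vertex-criticality requires a delicate case analysis, and obtaining a bound small enough for the computer search to be feasible is nontrivial. A secondary difficulty is pinning down the exact count of 12 rather than mere finiteness, which almost certainly relies on computational verification rather than a purely combinatorial enumeration.
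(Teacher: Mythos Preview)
The paper does not prove this theorem at all: it is quoted as a known result from \cite{BHS09, MM12} and used only as motivation. So there is no ``paper's own proof'' to compare against; the proofs in the cited references proceed via a structural analysis of $P_5$-free graphs (geared toward certifying $3$-colorability), not via a Ramsey-type order bound.

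Independently of that, your Ramsey step is incorrect as written. Chv\'atal's formula $R(P_5,K_s)=4(s-1)+1$ concerns graphs containing no $P_5$ \emph{as a subgraph}, whereas ``$P_5$-free'' in this paper means no \emph{induced} $P_5$. These are very different: $K_n$ is induced-$P_5$-free with $\alpha=1$ and arbitrarily many vertices, so the assertion ``any $P_5$-free graph with $\alpha(G)\le\alpha$ has fewer than $4\alpha+1$ vertices'' is false. After your reduction to $\omega(G)\le 3$ you could legitimately invoke $R(K_4,K_{\alpha+1})$ instead, but that gives a much larger (cubic in $\alpha$) bound, and in any case the crucial step---bounding $\alpha(G)$---is only gestured at (``I would argue by case analysis''), not carried out. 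Without an explicit bound on $\alpha$, the plan does not yield finiteness, let alone a bound small enough to make an exhaustive search feasible or to pin down the exact count of $12$.
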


In light of \autoref{thm:P5infinite} and \autoref{thm:P5finite}, it is natural to ask which subclasses of $P_5$-free graphs 
have finitely many $k$-vertex-critical graphs for $k\ge 5$. 
For example, it was known that there are exactly 13 5-vertex-critical $(P_5,C_5)$-free graphs~\cite{HMRSV15},
and that there are finitely many 5-vertex-critical ($P_5$,banner)-free graphs~\cite{CHLS19, HLS19}, and
finitely many $k$-vertex-critical $(P_5,\overline{P_5})$-free graphs for every fixed $k$~\cite{DHHMMP17}.
Hell and Huang proved that there are finitely many $k$-vertex-critical $(P_6,C_4)$-free graphs~\cite{HH17}.
This was later generalized to $(P_t,K_{r,s})$-free graphs in the context of $H$-coloring~\cite{KP19}. 
Apart from these, there seem to be very few results on the finiteness of $k$-vertex-critical graphs for $k\ge 5$.
The reason for this, we think, is largely because of the lack of a good characterization of $k$-vertex-critical graphs. 
In this paper, we introduce new techniques into the problem and prove some new results beyond 5-vertex-criticality.

\subsection{Our Contributions} 

We initiate a systematic study on the subclasses of $P_5$-free graphs.
In particular,  we focus on $(P_5,H)$-free graphs when $H$ has small number of vertices.
If $H$ has at most three vertices,  the answer is either trivial or can be easily deduced from known results.
So we study the problem for graphs $H$ when $H$ has four vertices.
There are 11 graphs on four vertices up to isomorphism:

\begin{itemize}
\item $K_4$ and $\overline{K_4}=4P_1$;
\item $P_2+2P_1$ and $\overline{P_2+2P_1}$;
\item $C_4$ and $\overline{C_4}=2P_2$;
\item $P_1+P_3$ and $\overline{P_1+P_3}$; 
\item $K_{1,3}$ and $\overline{K_{1,3}}=P_1+K_3$;
\item $P_4=\overline{P_4}$.
\end{itemize}
The graphs $\overline{P_2+2P_1}$, $\overline{P_1+P_3}$ and $K_{1,3}$ are usually 
called  {\em diamond}, {\em paw} and {\em claw}, respectively.

One can easily answer our meta question for
some graphs $H$ using known results, e.g., Ramsey's Theorem for $4P_1$-free graphs: any $k$-vertex-critical
$(P_5,4P_1)$-free graph is either $K_k$ or has at most $R(k,4)-1$ vertices, where $R(s,t)$ is the Ramsey number, namely
the minimum positive integer $n$ such that every graph of order $n$ contains either a clique of size $s$ or an independent set of size $t$. 
However, the answer for certain graphs $H$
cannot be directly deduced from known results. 
In this paper, we prove that there are only finitely many $k$-vertex-critical $(P_5,H)$-free graphs
for every fixed $k\ge 1$ when $H$ is $K_4$, or $\overline{P_2+2P_1}$, or $P_2+2P_1$, or $P_1+P_3$. 
(Note that these results do not follow from the finiteness of $k$-vertex-critical $(P_5,\overline{P_5})$-free graphs proved in~\cite{DHHMMP17}.)
By combining our new results with known results,
we obtain a complete classification of the finiteness of $k$-vertex-critical $(P_5,H)$-free graphs when $H$ has 4 vertices.

\begin{theorem}\label{thm:main}
Let $H$ be a graph of order 4 and $k\ge 5$ be a fixed integer.  
Then there are infinitely many $k$-vertex-critical $(P_5,H)$-free graphs if and only if
$H$ is $2P_2$ or $P_1+K_3$.
\end{theorem}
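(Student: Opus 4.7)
The plan is to prove both directions of the dichotomy by case analysis over the eleven graphs $H$ of order four, organizing them into the two infinite cases ($H\in\{2P_2, P_1+K_3\}$) and the nine finite cases; within the finite side I would further separate cases that follow from known results from those that require genuinely new arguments.

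For the infinite direction, for each fixed $k\ge 5$ I would exhibit explicit infinite families of $k$-vertex-critical $(P_5,H)$-free graphs when $H=2P_2$ and when $H=P_1+K_3$. A natural starting point is a suitable modification of the construction underlying \autoref{thm:P5infinite}, chosen so as to avoid the relevant 4-vertex pattern. For $H=P_1+K_3$, the guiding idea is to force all triangles of the graph to share a common vertex, so that no triangle has a non-neighbor; for $H=2P_2$, one wants to forbid induced matchings of size two, for example by ensuring that every two disjoint edges share a common neighbor.

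For the finite direction, the easy cases are: $H=P_4$ (every $P_4$-free graph is a cograph, hence perfect, so $K_k$ is the unique $k$-vertex-critical example); $H=4P_1$ (Ramsey's theorem bounds the order by $R(k,4)-1$ unless the graph is $K_k$); $H=C_4$, which follows from the Hell--Huang theorem on $(P_6,C_4)$-free graphs since $P_5$-free implies $P_6$-free; and the claw $K_{1,3}$ and the paw $\overline{P_1+P_3}$, for which the classical structural decompositions (e.g.\ Olariu's theorem that every connected paw-free graph is triangle-free or complete multipartite) combined with the $\chi$-boundedness of triangle-free $P_5$-free graphs bound both $k$ and the order.

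The genuinely new work is the finite direction for $H\in\{K_4,\overline{P_2+2P_1},P_2+2P_1,P_1+P_3\}$, and here the abstract's Ramsey-type arguments and the dual of Dilworth's theorem come in. For the diamond $\overline{P_2+2P_1}$, diamond-freeness means no two triangles share an edge, giving a rigid local structure around every triangle that one can exploit to bound the size of any $k$-vertex-critical graph. For $H=P_2+2P_1$ and $H=P_1+P_3$, the forbidden pattern constrains the non-neighborhood of every edge or every induced $P_3$, so Ramsey-style counting inside these non-neighborhoods can be used to bound the order. The main obstacle, I expect, is $H=K_4$: here $\omega\le 3$ and by Gy\'arf\'as's $\chi$-boundedness of $P_5$-free graphs the chromatic number is bounded, but $\chi$-boundedness alone does not yield finitely many critical graphs for each fixed $k$. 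For this case one must combine the $P_5$-free structural constraints with a Dilworth-dual argument --- partitioning a large critical graph into few ``local types'' so that two vertices of the same type can be identified without reducing $\chi$ --- to limit the number of essentially distinct configurations and thereby bound the order of any $k$-vertex-critical graph.
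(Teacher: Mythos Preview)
Your overall case-analysis framework matches the paper, and the easy finite cases ($P_4$, $4P_1$, $C_4$, paw via Olariu) are handled essentially as you describe. The genuine gap is your treatment of $H=K_4$, where you anticipate the hardest fight and assign the Dilworth-dual argument. In fact the paper disposes of $k\ge 6$ in one line: by a result of Esperet, Lemoine, Maffray and Morel, every $(P_5,K_4)$-free graph is $5$-colorable, so there are \emph{no} $k$-vertex-critical such graphs for $k\ge 6$; your remark that ``$\chi$-boundedness alone does not yield finitely many critical graphs'' misses that here the tight bound $\chi\le 5$ kills all large $k$ outright. The only real work is $k=5$, and there the paper uses a hybrid computer-plus-hand approach: a terminating enumeration algorithm seeded with $W_5$, with the graph $F$ (a $C_5$ plus a degree-$4$ vertex), and with $C_5$, combined with a structural lemma showing that any $(P_5,K_4,W_5,F)$-free graph containing $\overline{C_7}$ is $4$-colorable. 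The output is exactly two critical graphs, on $13$ and $14$ vertices. Your proposed type-partitioning route for $K_4$ is speculative and not what is done.

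The Dilworth-dual argument you describe is actually deployed for $H=P_1+P_3$, not $K_4$: after partitioning $V\setminus K$ into non-neighbor sets $F_i$ of a maximal clique $K$, each $F_i$ is a disjoint union of cliques, and the number of components of $F_i$ is bounded by building a comparability graph on them (ordered by inclusion of their outside neighborhoods) and invoking its perfection. For $H=P_2+2P_1$ the paper instead uses a binary ``type vector'' pigeonhole argument, and for the diamond it gives an explicit $|G|\le 57$ bound via the neighborhood structure around an induced $C_5$. Finally, your infinite direction is overcomplicated: the $5$-vertex-critical $2P_2$-free family from \cite{HMRSV15} is already $(P_1+K_3)$-free as well, so a single known construction (plus $k-5$ universal vertices for $k\ge 6$) handles both cases at once --- no separate ad hoc families are needed.
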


To obtain the complete classification, we employ various techniques, some of which have not been used before to the best our knowledge.
For $H=K_4$, we use a hybrid approach combining the power of a computer algorithm and mathematical analysis. 
For $P_1+P_3$ and $P_2+2P_1$, we used the idea of fixed sets (that was first used in~\cite{HKLSS10} to give a polynomial-time
algorithm for $k$-coloring $P_5$-free graphs for every fixed $k$) combined with Ramsey-type arguments and the dual of Dilworth's Theorem.
We hope that these techniques could be helpful for attacking other related problems.

The remainder of the paper is organized as follows. We present some preliminaries
in \autoref{sec:pre} and prove our new results in \autoref{sec:new}. 
Finally, we give the proof of \autoref{thm:main} in \autoref{sec:classification}.

\section{Preliminaries}\label{sec:pre}

For general graph theory notation we follow~\cite{BM08}.
Let $G=(V,E)$ be a graph.  If $uv\in E$, we say that $u$ and $v$ are {\em neighbors} or {\em adjacent}; otherwise
$u$ and $v$ are {\em nonneighbor} or {\em nonadjacent}.
The \emph{neighborhood} of a vertex $v$, denoted by $N_G(v)$, is the set of neighbors of $v$.
For a set $X\subseteq V(G)$, let $N_G(X)=\bigcup_{v\in X}N_G(v)\setminus X$.
We shall omit the subscript whenever the context is clear.
For $X,Y\subseteq V$, we say that $X$ is \emph{complete} (resp.\ \emph{anticomplete}) to $Y$
if every vertex in $X$ is adjacent (resp.\ nonadjacent) to every vertex in $Y$.
If $X=\{x\}$, we write ``$x$ is complete (resp.\ anticomplete) to $Y$'' instead of ``$\{x\}$ is is complete (resp.\ anticomplete) to $Y$''.
If a vertex $v$ is neither complete nor anticomplete to a set $S$, we say that $v$ is {\em mixed} on $S$.
We say that $H$ is a {\em homogeneous set} if no vertex in $V-H$ is mixed on $H$.
A vertex is \emph{universal} in $G$ if it is adjacent to all other vertices.
A vertex subset $K\subseteq  V$ is a \emph{clique cutset} if $G-K$ has more components than $G$ and $K$
induces a clique.  For $S\subseteq V$, the subgraph \emph{induced} by $S$, is denoted by $G[S]$.
A {\em $k$-hole} in a graph is an induced cycle $H$ of length $k\ge 4$. If $k$ is odd, we say that $H$ is an {\em odd hole}.
An {\em $k$-antihole} in $G$ is a $k$-hole in $\overline{G}$. Odd antiholes are defined analogously.
The graph obtained from $C_k$ by adding a universal vertex, denoted by $W_k$, is called {\em $k$-wheel}.

\medskip
\noindent {\bf List coloring.} Let $[k]$ denote the set $\{1,2,\ldots,k\}$.
A {\em $k$-list assignment} of a graph $G$ is a function $L:V(G)\rightarrow 2^{[k]}$.
The set $L(v)$, for a vertex $v$ in $G$, is called the {\em list} of $v$.
In the {\em list $k$-coloring} problem, we are given a graph $G$ with a $k$-list assignment $L$ and asked 
whether $G$ has an {\em $L$-coloring}, i.e., a $k$-coloring of $G$ such that every
vertex is assigned a color from its list. We say that $G$ is {\em $L$-colorable} if $G$ has an $L$-coloring.
If the list of every vertex is $[k]$, then the list $k$-coloring problem is precisely
the $k$-coloring problem. 

A common technique in the study of graph coloring is called {\em propagation}. If a vertex $v$ has its color forced to be $i\in [k]$,
then no neighbor of $v$ can be colored with color $i$. This motivates the following definition. 

Let $(G,L)$ be an instance of the list $k$-coloring problem. The color of a vertex $v$ is said to be {\em forced} if $|L(v)|=1$.
A {\em propagation from a vertex $v$} with $L(v)=\{i\}$ is the procedure of removing $i$ from the list of every neighbor of $v$.
If we denote by the resulting $k$-list assignment by $L'$, then $G$ is $L$-colorable if and only if $G-v$ is $L'$-colorable.  
A propagation from $v$ could make the color of other vertices forced; if we continue to propagate from those vertices until
no propagation is possible, we call the procedure ``{\em exhaustive propagation from $v$}''.
It is worth mentioning that the idea of propagation is featured in many recent studies on coloring $P_t$-free graphs and related problems, see 
\cite{BCMSSZ17, CGSZ20} for example.

\medskip
\noindent {\bf An example of propagation.} Let $G$ be a 4-vertex path $w,x,y,z$ with $L(w)=\{1\}$, $L(x)=\{1,2\}$, $L(y)=\{2,3\}$, and $L(y)=\{1,2\}$.
Then propagation from $w$ results in the new list assignment $L'$ where $L'(x)=\{2\}$ and $L'(v)=L(v)$ for $v\neq x$.
On the other hand, exhaustive propagation from $w$ results in the new list assignment $L''$ where 
$L''(w)=\{1\}$, $L''(x)=\{2\}$, $L''(y)=\{3\}$, $L''(z)=\{1,2\}$.

\medskip
We proceed with a few useful results that will be needed later. The first one is a folklore property of $k$-vertex-critical graphs.

\begin{lemma}[Folklore]\label{lem:clique cutsets}
Any $k$-vertex-critical graph cannot contain clique cutsets.
\end{lemma}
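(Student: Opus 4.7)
The plan is to argue by contradiction: assume $G$ is $k$-vertex-critical and $K \subseteq V(G)$ is a clique cutset, and construct a proper $(k-1)$-coloring of $G$, contradicting $\chi(G)=k$. Let $C_1,\ldots,C_m$ (with $m\ge 2$) be the connected components of $G-K$, and for each $i$ set $G_i = G[V(C_i) \cup K]$. Because $m\ge 2$, every $G_i$ is a proper induced subgraph of $G$, so by vertex-criticality (together with the fact that any proper induced subgraph sits inside some $G-v$) we have $\chi(G_i) \le k-1$.

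Next I would exploit that $K$ is a clique: in any proper coloring of $G_i$, the $|K|$ vertices of $K$ must all receive distinct colors. Fix for each $i$ a $(k-1)$-coloring $\phi_i$ of $G_i$, and then permute the $k-1$ color classes of $\phi_i$ to obtain a new coloring $\phi_i'$ in which the vertices of $K$ all receive the same colors as they do under $\phi_1$; this is always possible because on $K$ each $\phi_i$ is simply an injection into $[k-1]$, and any two injections from $K$ into $[k-1]$ differ by a permutation of $[k-1]$.

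Finally, I would glue the colorings together by defining $\phi(v) = \phi_i'(v)$ for $v \in V(C_i)\cup K$; this is well-defined because the $\phi_i'$ agree on the overlap $K$, and it is proper because every edge of $G$ lies inside some $G_i$ (since $K$ separates the components, no edge has one endpoint in $C_i$ and the other in $C_j$ for $i\ne j$). The resulting $\phi$ is a $(k-1)$-coloring of $G$, contradicting $\chi(G)=k$.

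There is no real obstacle here; the only point requiring a moment's care is the color-permutation step, which uses exactly that $K$ is a clique (so $|K|\le k-1$ and the coloring of $K$ in each $\phi_i$ is injective). The same argument is what makes clique cutsets a standard decomposition tool in chromatic-number problems, and it is the reason the lemma is folklore.
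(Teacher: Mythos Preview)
Your argument is correct and is precisely the standard folklore proof. The paper does not supply a proof of this lemma at all (it is merely stated with the tag ``Folklore''), so there is nothing to compare against; your write-up would serve perfectly well as the omitted justification.
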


Another folklore property of vertex-critical graphs is that such graph cannot contain two nonadjacent vertices $u,v$ such that
$N(v)\subseteq N(u)$. We generalize this property to anticomplete subsets.

\begin{lemma}\label{lem:dominated subsets}
Let $G$ be a $k$-vertex-critical graph. Then $G$ has no two nonempty disjoint subsets $X$ and $Y$ of $V(G)$ that satisfy all the following conditions.

\begin{itemize}
\item $X$ and $Y$ are anticomplete to each other.
\item $\chi(G[X])\le \chi(G[Y])$.
\item $Y$ is complete to $N(X)$.
\end{itemize}
\end{lemma}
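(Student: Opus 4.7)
My plan is to argue by contradiction. Assume that such nonempty disjoint sets $X$ and $Y$ exist; I will construct a proper $(k-1)$-coloring of $G$, contradicting $\chi(G) = k$. Set $p := \chi(G[X])$ and $q := \chi(G[Y])$, so that $p \le q$ by the second hypothesis. Pick any vertex $x \in X$; since $G$ is $k$-vertex-critical, $G - x$ admits a proper $(k-1)$-coloring $c$.

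Consider the set $C_Y := \{c(y) : y \in Y\}$ of colors used by $c$ on $Y$. Because $c$ restricts to a proper coloring of $G[Y]$, we have $|C_Y| \ge q \ge p$. The key observation is that $C_Y$ is disjoint from the set of colors $c$ assigns to $N(X)$: each $u \in N(X)$ is adjacent to every $y \in Y$ by the third hypothesis, so $c(u) \ne c(y)$.

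Now I modify $c$ on $X$. Choose any $p$ colors from $C_Y$ and use them to give $G[X]$ a proper $p$-coloring (possible since $p = \chi(G[X]) \le |C_Y|$). Define $c'$ to equal this new coloring on $X$ and to agree with $c$ on $V(G)\setminus X$. To verify that $c'$ is proper, it suffices to check edges with one endpoint in $X$ and the other in $V(G)\setminus X$. Such an edge must land in $N(X)$ by definition of $N(X)$, and the anticomplete hypothesis guarantees $Y \cap N(X) = \emptyset$; on the $X$ side $c'$ uses colors in $C_Y$, while $c$ avoids $C_Y$ on $N(X)$ by the previous paragraph. Hence no conflict arises, and $c'$ is a proper $(k-1)$-coloring of $G$, a contradiction.

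I do not anticipate a serious obstacle. The one idea that needs to be spotted is that the palette $C_Y$ is simultaneously (a) at least $p = \chi(G[X])$ colors wide, thanks to $\chi(G[X]) \le \chi(G[Y])$, and (b) entirely unused on $N(X)$, thanks to $Y$ being complete to $N(X)$. This makes $C_Y$ the natural pool with which to recolor $X$, and all three hypotheses are used exactly once in verifying that the resulting coloring of $G$ is proper.
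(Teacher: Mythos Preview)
Your proof is correct and follows essentially the same approach as the paper's: obtain a $(k-1)$-coloring of $G$ minus (part of) $X$, observe that the colors appearing on $Y$ are avoided on $N(X)$ because $Y$ is complete to $N(X)$, and use at least $\chi(G[X])$ of those colors to recolor $X$. The only cosmetic difference is that the paper deletes all of $X$ at once (valid since $X\neq\emptyset$ and $G$ is $k$-vertex-critical), whereas you delete a single vertex $x\in X$ and then overwrite $c$ on all of $X$; the verification is otherwise identical.
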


\begin{proof}
Suppose that $G$ has a pair of nonempty subsets $X$ and $Y$ that satisfy all three conditions.
Since $G$ is $k$-vertex-critical, $G-X$ has a $(k-1)$-coloring $\phi$.  Let $t=\chi(G[Y])$.
Since $Y$ is complete to $N(X)$, at least $t$ colors do not appear on any vertex in $N(X)$ under $\phi$.
So we can obtain a $(k-1)$-coloring of $G$ by coloring $G[X]$ with those $t$ colors. This contradicts
that $G$ is $k$-chromatic.
\end{proof}

A graph $G$ is {\em perfect} if $\chi(H)=\omega(H)$ for each induced subgraph $H$ of $G$.
An {\em imperfect} graph is a graph that is not perfect.
A classical theorem of Dilworth~\cite{Dil50} states that the largest size of an antichain in a partially ordered set is equal to 
the minimum number of chains that partition the set. We will use the dual of Dilworth's Theorem which says that
the largest size of a chain in a partially ordered set is equal to  the minimum number of antichains that partition the set.
This was first proved by Mirsky~\cite{Mir71} and it has an equivalent graph-theoretic interpretation via comparability graphs.
 A graph is a {\em comparability graph}
if the vertices of the graph are elements of a partially ordered set and 
two vertices are connected by an edge if and only if the corresponding elements are comparable.

\begin{theorem}[Dual Dilworth Theorem~\cite{Mir71}]\label{thm:Dual Dilworth}
Every comparability graph is perfect.
\end{theorem}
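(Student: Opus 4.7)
The plan is to observe that Mirsky's theorem, which decomposes any finite poset into antichains whose number equals the length of the longest chain, directly yields the coloring we need. The approach has three logical steps.

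First, I would reduce to showing $\chi(G) = \omega(G)$ for a single comparability graph. This is justified because the induced subgraph of a comparability graph $G$ corresponding to a poset $(P,\leq)$ on a subset $S \subseteq P$ is itself the comparability graph of the restricted poset $(S, \leq|_S)$. So any induced subgraph of a comparability graph is again a comparability graph, and it suffices to prove $\chi(H) = \omega(H)$ on the class itself.

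Second, I would translate the graph parameters into poset parameters. In the comparability graph of $(P, \leq)$, a clique is precisely a set of pairwise comparable elements, i.e.\ a chain, and an independent set is precisely a set of pairwise incomparable elements, i.e.\ an antichain. Thus $\omega(G)$ equals the length of the longest chain in $P$, and a proper $k$-coloring of $G$ is exactly a partition of $P$ into $k$ antichains.

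Third, I would produce the desired coloring using the classical height function. Let $k$ be the length of the longest chain in $P$ and for each $x \in P$ define $f(x)$ to be the length of the longest chain of $P$ ending at $x$. Clearly $f(x) \in \{1, \ldots, k\}$. If $x < y$ with $f(x) = f(y) = i$, then prepending the chain ending at $x$ to $y$ gives a chain of length $i+1$ ending at $y$, contradicting $f(y) = i$. Hence each fiber $f^{-1}(i)$ is an antichain, so $f$ is a proper $k$-coloring of $G$, giving $\chi(G) \leq k = \omega(G)$. The reverse inequality $\chi(G) \geq \omega(G)$ is immediate.

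There is no real obstacle here — the theorem is essentially a restatement of Mirsky's result, and the only subtlety worth mentioning is the closure of the class of comparability graphs under taking induced subgraphs, which is what lets us promote the equality $\chi(G) = \omega(G)$ to perfection.
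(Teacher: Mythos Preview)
Your argument is correct and is precisely the standard proof of Mirsky's theorem via the height function, together with the observation that the class of comparability graphs is closed under taking induced subgraphs. Note, however, that the paper does not actually prove this theorem: it is stated with a citation to Mirsky~\cite{Mir71} and used as a black box, so there is no ``paper's own proof'' to compare against. Your write-up would serve perfectly well as the missing justification.
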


We conclude this section with the celebrated Strong Perfect Graph Theorem~\cite{CRST06}.

\begin{theorem}[Strong Perfect Graph Theorem~\cite{CRST06}]\label{thm:SPGT}
A graph is perfect if and only if it contains no odd holes or odd antiholes.
\end{theorem}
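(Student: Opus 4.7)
The forward direction is essentially immediate from the definition. If $G$ is perfect, then every induced subgraph $H$ of $G$ satisfies $\chi(H)=\omega(H)$. But an odd hole $C_{2\ell+1}$ with $\ell\ge 2$ has $\omega=2$ and $\chi=3$, and an odd antihole $\overline{C_{2\ell+1}}$ has $\omega=\ell$ and $\chi=\ell+1$; both are imperfect. Hence no perfect graph can contain either as an induced subgraph. This direction was known long before Berge's conjecture was resolved.

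The hard direction is showing that every \emph{Berge graph} (a graph with no odd hole and no odd antihole) is perfect. The plan is the one carried out by Chudnovsky, Robertson, Seymour and Thomas: first establish a structural theorem asserting that every Berge graph either lies in one of five explicitly described basic classes of perfect graphs (bipartite graphs, line graphs of bipartite graphs, their complements, and the so-called double split graphs), or else admits one of a short list of structural decompositions (a 2-join, the complement of a 2-join, or a balanced skew partition). Given the structural theorem, the conclusion follows by induction on $|V(G)|$: the basic classes are directly verified to be perfect (for line graphs of bipartite graphs using K\"onig's edge-coloring theorem, and for the remaining classes by elementary arguments plus Lov\'asz's theorem that complementation preserves perfection), and classical "gluing" lemmas show that perfection lifts through a 2-join and through a balanced skew partition. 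The lifting lemma for balanced skew partitions is the only genuinely delicate piece of this combining step.

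The main obstacle, and by far the bulk of the work, is the structural theorem itself. Its proof occupies roughly 150 pages and proceeds by an intricate case analysis driven by which configurations (long prisms, various kinds of wheels and antiwheels, appropriate "trigraphs'') must appear in a minimum non-basic Berge graph, together with careful propagation arguments showing that such configurations force one of the listed decompositions. I would have no realistic hope of redeveloping that case analysis here, and in the present paper the theorem is invoked purely as a black box: once a $(P_5,H)$-free graph under consideration is shown to avoid all odd holes and odd antiholes, \autoref{thm:SPGT} immediately yields perfection and hence $\chi=\omega$, which is what is needed downstream.
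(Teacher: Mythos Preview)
Your proposal is appropriate: the paper gives no proof of this theorem at all---it is simply stated with a citation to~\cite{CRST06} and then invoked as a black box (in the diamond-free case, to conclude that an imperfect $(P_5,\text{diamond})$-free graph must contain a $C_5$). Your sketch of the Chudnovsky--Robertson--Seymour--Thomas strategy is a fair high-level summary and, importantly, you correctly identify that no proof is expected or supplied here.
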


\section{New Results}\label{sec:new}

In this section, we prove four new results: there are finitely many $k$-vertex-critical $(P_5,H)$-free graphs
when $H\in \{K_4,\overline{P_2+2P_1},P_2+2P_1,P_1+P_3\}$.

\subsection{$K_4$-Free Graphs}

%Let $G_1$ be the 13-vertex graph obtained from a 5-hole by substituting each of two nonadjacent vertices on the hole with a 5-hole. 
Let $G_1$ be the 13-vertex graph with vertex set $\{0,1,\ldots,12\}$ and the following edges:
\begin{itemize}
\item $\{3,4,5,6,7\}$ and $\{0,1,2,8,9\}$ induce two disjoint 5-holes $Q$ and $Q'$;
\item 12 is complete to $Q\cup Q'$;
\item 11 is complete to $Q$ and 10 is complete to $Q'$ with 10 and 11 being connected by an edge.
\end{itemize}

Let $G_2$ be the 14-vertex graph with vertex set $\{0,1,\ldots,13\}$ and the following edges:
\begin{itemize}
\item $\{12,13\}$ is a cutset of $G_2$ such that 12 and 13 are not adjacent and $G_2-\{12,13\}$ has exactly two components;
\item One component of $G_2-\{12,13\}$ is a 5-hole induced by $\{0,1,2,3,4\}$, and this 5-hole is complete to $\{12,13\}$;
\item The other component, induced by $\{5,6,7,8,9,10,11\}$, is the graph in \autoref{fig:ruby}, and 12 is complete to $\{5,8,9,10,11\}$
and 13 is complete to $\{6,7,9,10,11\}$.
\end{itemize}
The adjacency lists of $G_1$ and $G_2$ are given in the Appendix. It is routine to verify that $G_1$ and $G_2$ are 5-vertex-critical
$(P_5,K_4)$-free graphs. The main result in this subsection is that they are the only 5-vertex-critical $(P_5,K_4)$-free graphs. 

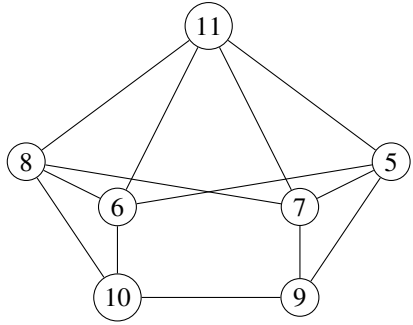
\begin{figure}[t]
\centering

\begin{tikzpicture}[scale=0.6]
\tikzstyle{vertex}=[draw, circle, fill=white!100, minimum width=4pt,inner sep=2pt]
%\draw[step=1cm,color=gray] (-6,-6) grid (6,6);

\node[vertex] (11) at (0,6) {11};
\node[vertex] (5) at (4,3) {5};
\node[vertex] (9) at (2,0) {9};
\node[vertex] (10) at  (-2,0) {10};
\node[vertex] (8) at (-4,3) {8};

\draw (11)--(5)--(9)--(10)--(8)--(11);

\node[vertex] (7) at (2,2) {7};
\node[vertex] (6) at (-2,2) {6};
\draw (8)--(7)--(11);
\draw (5)--(7)--(9);
\draw (8)--(6)--(10);
\draw (5)--(6)--(11);
\end{tikzpicture}

\caption{One component of $G_2-\{12,13\}$.}\label{fig:ruby}

\end{figure}

\begin{theorem}\label{thm:K4}\label{thm:K4oddhole}
Let $G$ be a 5-vertex-critical $(P_5,K_4)$-free graph. Then $G$ is isomorphic to either $G_1$ or $G_2$.
\end{theorem}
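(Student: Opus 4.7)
The plan is to combine a structural reduction with a computer search, matching the ``hybrid approach'' mentioned in the introduction. Since $G$ is $K_4$-free, $\omega(G)\le 3<5=\chi(G)$, so $G$ is imperfect. By \autoref{thm:SPGT}, $G$ contains an odd hole or odd antihole. The $P_5$-freeness forbids an induced $C_{2\ell+1}$ for $\ell\ge 3$ (any five consecutive vertices on such a cycle induce a $P_5$), and the $K_4$-freeness forbids an induced $\overline{C_{2\ell+1}}$ for $\ell\ge 4$ (since $\omega(\overline{C_{2\ell+1}})=\ell$). Hence $G$ contains an induced $C_5$ or an induced $\overline{C_7}$, giving a small imperfect ``core'' $Q$ around which the rest of $G$ must be organized.

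The next phase is a neighborhood analysis: for each $v\in V(G)\setminus V(Q)$, classify $v$ by $N(v)\cap V(Q)$. The $K_4$-freeness forbids $v$ from dominating any triangle of $Q$, while $P_5$-freeness rules out many ``path-extending'' attachments, reducing the $2^{|V(Q)|}$ possibilities to a short list of allowed types. I would then use \autoref{lem:clique cutsets} and \autoref{lem:dominated subsets} to control the interactions \emph{between} the attachment classes: vertices of the same type tend to form near-twin pairs, and any anticomplete pair $(X,Y)$ with $\chi(G[X])\le \chi(G[Y])$ and $Y$ complete to $N(X)$ must collapse. Iterating these reductions, together with a standard argument that color classes within each attachment type cannot be large, should yield an absolute bound $|V(G)|\le N$ for an explicit constant $N$ of moderate size.

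With such a bound in hand, the argument finishes by a computer enumeration: generate all $(P_5,K_4)$-free graphs on at most $N$ vertices, filter for $5$-vertex-criticality, and verify that only $G_1$ and $G_2$ appear. In practice this would be implemented with early pruning during generation (since $5$-vertex-criticality, $P_5$-freeness, and $K_4$-freeness are all hereditary or easy to check incrementally), and combined with a separate verification that $G_1$ and $G_2$ are indeed $5$-vertex-critical and $(P_5,K_4)$-free.

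The principal obstacle is pushing the structural analysis far enough to make the enumeration tractable: a naive bound on $|V(G)|$ is almost certainly too large, and most of the work will be in squeezing out enough constraints on how ``mixed'' vertices on the core $Q$ interact across the whole graph. A secondary difficulty is that $\overline{C_7}$ does not contain an induced $C_5$ (one can check that no $5$-vertex induced subgraph of $\overline{C_7}$ is $2$-regular), so the $\overline{C_7}$ case must either be handled in parallel to the $C_5$ case or reduced to it by an auxiliary argument exhibiting a $C_5$ in any larger $(P_5,K_4)$-free graph built on top of a $\overline{C_7}$.
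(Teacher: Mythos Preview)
Your high-level plan---reduce to a small imperfect core, classify attachments, then finish by computer---matches the paper's ``hybrid approach'' in spirit, but the execution diverges substantially and the central difficulty is left unresolved. The paper does \emph{not} first establish an explicit bound $|V(G)|\le N$ and then enumerate all small graphs. Instead it runs the extension algorithm of \autoref{alg:main} directly from a seed and relies on termination to witness finiteness. The obstacle is that the search from $S=C_5$ with $\mathcal{H}=\{P_5,K_4\}$ does not obviously terminate, and the paper's key idea is a case split on whether $G$ contains $W_5$ or $F$ (the $5$-hole plus a vertex of degree~$4$ on it). If $G$ contains one of these, the extension algorithm seeded there terminates and outputs $G_1,G_2$ (\autoref{lem:W5F}). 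If $G$ is $(W_5,F)$-free, the paper proves by hand (\autoref{lem:7-antihole}) that any $(P_5,K_4,W_5,F)$-free graph containing $\overline{C_7}$ is $4$-colourable, via a detailed classification of attachments to the $7$-antihole into sets $T_i,F_i$ and an explicit list-colouring; hence a $5$-vertex-critical $G$ in this subcase is $\overline{C_7}$-free, and the extension algorithm from $C_5$ with $\overline{C_7}$ added to the forbidden list now terminates, yielding only $G_1$ (\autoref{lem:7-antihole-free}).

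The gap in your plan is that the promised bound $|V(G)|\le N$ is asserted but not argued: unlike the later diamond and $P_1+P_3$ cases, the $K_4$-free constraint does not by itself force the attachment classes around $C_5$ to be small, and Lemmas~\ref{lem:clique cutsets} and~\ref{lem:dominated subsets} alone do not obviously collapse them. The paper sidesteps this entirely by relocating the mathematical work: what you flag as a ``secondary difficulty''---the $\overline{C_7}$ case---is in fact where essentially all of the non-computer content lies, and it is resolved not by locating a $C_5$ inside a larger configuration but by proving $4$-colourability outright. The $W_5/F$ case split and the colouring lemma are the two ingredients you are missing.
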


We will prove \autoref{thm:K4} in a series of intermediate steps.
We will need the following result.
\begin{theorem}[\cite{CRST10}]
Any $K_4$-free graph with no odd holes is 4-colorable.
\end{theorem}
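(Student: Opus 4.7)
The plan is to leverage the Strong Perfect Graph Theorem (\autoref{thm:SPGT}) to reduce to one exceptional substructure. Since $G$ has no odd hole, $G$ fails to be perfect only if it contains an induced odd antihole $\overline{C_{2k+1}}$ for some $k\ge 2$. A direct count gives $\omega(\overline{C_{2k+1}})=k$ (a clique of $\overline{C_{2k+1}}$ is an independent set of $C_{2k+1}$, so has at most $k$ vertices), so the $K_4$-free hypothesis forces $k\le 3$; moreover $\overline{C_5}=C_5$ is itself a $5$-hole and is therefore excluded. Hence the only odd antihole $G$ can contain is $\overline{C_7}$, which is $K_4$-free and satisfies $\chi(\overline{C_7})=4$ (each color class is an edge or a vertex of $C_7$, so has size at most $2$, while $\lceil 7/2\rceil=4$). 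If $G$ contains no $\overline{C_7}$, then $G$ is perfect by \autoref{thm:SPGT} and hence $\chi(G)=\omega(G)\le 3$, concluding this case.

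The remaining case, in which $G$ contains an induced $\overline{C_7}$, is the substantive one. I would fix such an antihole on vertices $v_0,\ldots,v_6$ (indices mod $7$, with $v_iv_j$ an edge iff $|i-j|\not\equiv 1\pmod 7$). For each $u\in V(G)\setminus\{v_0,\ldots,v_6\}$ I would classify $u$ by its neighborhood $N(u)\cap\{v_0,\ldots,v_6\}$. The $K_4$-free constraint forbids $u$ from being adjacent to any triangle $\{v_i,v_{i+2},v_{i+4}\}$, and the no-odd-hole constraint eliminates many further attachment patterns (those that close up a $5$-hole or $7$-hole together with a subpath of the induced $\overline{C_7}$). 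A careful case analysis ought to leave a short list of admissible neighborhoods, and the goal would be to show that each admissible pattern lets $u$ fit into one of the four color classes of a canonical $4$-coloring of $\overline{C_7}$.

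\textbf{Main obstacle.} The local analysis around a single fixed $\overline{C_7}$ is not by itself strong enough: two outside vertices with individually admissible attachments may still conflict with one another, and there may be long induced subgraphs of $G$ only loosely tied to the fixed antihole. The actual proof of Chudnovsky, Robertson, Seymour and Thomas therefore proceeds through a structural decomposition theorem: they show that every $K_4$-free odd-hole-free graph either belongs to a short list of basic graphs (each of which can be verified by hand to be $4$-colorable) or admits a decomposition along a well-behaved cut (a clique cutset, a homogeneous set, or a balanced skew partition) that reduces $4$-colorability to strictly smaller pieces. Establishing such a decomposition is the hard technical core; once it is in hand, $4$-colorability follows by induction, using gluing arguments of the flavor of \autoref{lem:clique cutsets} and \autoref{lem:dominated subsets} to combine colorings across the decomposition.
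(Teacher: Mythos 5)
This statement is imported into the paper verbatim from \cite{CRST10}; the paper gives no proof of its own, so the only fair comparison is with the argument in that reference. Your first paragraph is correct and complete as far as it goes: since every odd antihole $\overline{C_{2k+1}}$ with $k\ge 4$ contains a $K_4$ and $\overline{C_5}=C_5$ is itself an odd hole, the Strong Perfect Graph Theorem does reduce the statement to graphs containing an induced $\overline{C_7}$, and the $\overline{C_7}$-free case follows immediately from $\chi=\omega\le 3$. That reduction is worth having, but it is the easy half.

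The remaining case is a genuine gap, and you have identified it yourself: you do not prove it, you only describe (approximately) how \cite{CRST10} proves it. Two concrete points. First, the local strategy you sketch --- classify the attachments of each outside vertex to a fixed $\overline{C_7}$ and slot it into a canonical color class --- cannot work on its own, because without an additional hypothesis such as $P_5$-freeness the graph can contain large induced pieces with no edges to the fixed antihole at all, and nothing constrains them. (Compare \autoref{lem:7-antihole} of this paper, which carries out exactly this kind of attachment analysis but only under the extra hypotheses that $G$ is $P_5$-free, $W_5$-free and $F$-free; even then it needs several further claims about adjacencies \emph{between} the attachment classes before a list-coloring propagation closes the argument.) Second, the decomposition you attribute to \cite{CRST10} --- clique cutsets, homogeneous sets, balanced skew partitions --- is the toolkit of the Strong Perfect Graph Theorem, not of that paper; the structure theorem for $K_4$-free odd-hole-free graphs is a separate and substantial result (its decompositions are of a different kind, e.g.\ harmonious cutsets, together with an explicit list of basic graphs), and reproving it is well beyond what can be done here. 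So the proposal should be read as a correct reduction plus an accurate admission that the theorem's substantive content must be cited, which is precisely what the paper does.
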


The next two lemmas are based on a computer generation approach to exhaustively generate all $k$-vertex-critical graphs in a given class of 
$\mathcal{H}$-free graphs via a recursive algorithm. The idea of computer generation was first used in~\cite{HMRSV15}, and later developed extensively by 
Goedgebeur and Schaudt~\cite{GO18} and Chudnovsky et al.~\cite{CGSZ20JCTB}.

We say that $G'$ is a {\em 1-vertex extension} of $G$ if $G$ can be obtained from $G'$ by deleting a vertex in $G'$.
Roughly speaking, the generation algorithm starts with some small substructure which must occur in any $k$-vertex-critical graph, and then 
exhaustively searches for all 1-vertex extensions of the substructure.  The algorithm stores
those extensions that are $k$-vertex-critical and $\mathcal{H}$-free in the output list $\mathcal{F}$.
Then it recursively repeats the procedure for all $(k-1)$-colorable substructures found in the previous iterations.
The pesudocode of the generation algorithm is given in \autoref{alg:main} and \autoref{alg:extend}.

It should be noted that with a naive implementation the algorithm may not terminate.
For instance, if we extend a graph $G$ by repeatedly adding vertices that have the same neighborhood as a vertex in $G$, the program will never terminate. So one has to design certain pruning rules to make the algorithm terminate. 
For instance, if $G$ contains two nonadjacent vertices $u,v$ such that $N(u)\subseteq N(v)$, then we only need to consider all 1-vertex extensions
$G'$ such that the unique vertex in $V(G')\setminus V(G)$ is adjacent to $u$ but not adjacent to $v$ (by \autoref{lem:dominated subsets}). 
In~\cite{HMRSV15}, the authors designed two pruning rules like this so that the algorithm terminates with 13 5-vertex-critical $(P_5,C_5)$-free
graphs.  Later, the technique was extensively developed by Goedgebeur and Schaudt~\cite{GO18} who introduced many more useful pruning rules
that are essential for generating all critical graphs in certain classes of graphs, e.g., 4-vertex-critical $(P_7,C_4)$-free graphs and 
4-vertex-critical $(P_8,C_4)$-free graphs.  The word ``valid'' in \autoref{alg:extend} is used precisely to quantify those extensions that
survive a specific set of pruning rules. 

The algorithm we use in this paper is exactly the one developed in~\cite{GO18}. Hence, the valid extensions on line 8 in 
Extend($G$) are with respect to all pruning rules given in Algorithm~2 in~\cite{GO18} (since we only use those rules as a black box, we do not define them here).
\begin{theorem}[\cite{GO18}]\label{thm:termination}
If  \autoref{alg:main} terminates and returns the list $\mathcal{F}$, then $\mathcal{F}$ is exactly the set  of all k-vertex-critical 
$\mathcal{H}$-free graphs containing $S$.
\end{theorem}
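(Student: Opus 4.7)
\textbf{Proof proposal for Theorem \ref{thm:termination}.}

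The plan is to establish the equality of the set $\mathcal{F}$ returned by the algorithm with the set $\mathcal{K}$ of all $k$-vertex-critical $\mathcal{H}$-free graphs containing $S$ by proving the two inclusions separately. The inclusion $\mathcal{F}\subseteq \mathcal{K}$ is the easy direction: the only line that writes into $\mathcal{F}$ (in \autoref{alg:main}/\autoref{alg:extend}) does so only after explicitly testing that the candidate graph is $k$-vertex-critical, $\mathcal{H}$-free, and contains $S$. Moreover a simple induction on the recursion depth shows that every graph ever passed to Extend is obtained from $S$ by successively adding vertices, so it contains $S$ as an induced subgraph. Thus every output satisfies all three membership conditions of $\mathcal{K}$.

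The content lies in the reverse inclusion $\mathcal{K}\subseteq \mathcal{F}$. Fix a target graph $G^\star\in\mathcal{K}$ and write $n^\star=|V(G^\star)|-|V(S)|$. The plan is to prove, by induction on $m$ from $0$ to $n^\star$, the following reachability invariant: there exists an induced subgraph $G_m$ of $G^\star$ with $V(S)\subseteq V(G_m)$ and $|V(G_m)|=|V(S)|+m$ such that the algorithm calls Extend$(G_m)$ at some point. The base case $m=0$ holds because $S$ is the initial call in \autoref{alg:main}. For the inductive step, suppose Extend$(G_m)$ is called with $G_m\subsetneq G^\star$. Then $G_m$ is not $k$-chromatic (else it would already equal $G^\star$ by vertex-criticality of $G^\star$ together with the condition that $G_m$ is an induced subgraph of $G^\star$), so the algorithm enters the loop on line 8 and iterates over all valid 1-vertex extensions. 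The crux of the proof is to exhibit some valid extension $G_{m+1}$ which is still an induced subgraph of $G^\star$; then Extend$(G_{m+1})$ is called, and the invariant propagates. Applying the invariant at $m=n^\star$ gives $G_{n^\star}=G^\star$, and since the final test is passed by $G^\star\in\mathcal{K}$, it is inserted into $\mathcal{F}$.

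The main obstacle, and the only nontrivial part of the argument, is the soundness claim just used: if $G_m\subsetneq G^\star$ is a proper induced subgraph of a $k$-vertex-critical $\mathcal{H}$-free graph $G^\star$, then among the 1-vertex extensions $G_{m+1}\subseteq G^\star$ of $G_m$ to the vertex set $V(G_m)\cup\{v\}$ for some $v\in V(G^\star)\setminus V(G_m)$, at least one survives every pruning rule used by the algorithm. Because the pruning rules from \cite{GO18} are invoked here as a black box, the proof is not a single structural argument but rather a rule-by-rule verification: one checks that each rule only discards an extension when a different, equally valid extension producing the same (or an isomorphic) descendant is still considered, or when the discarded extension provably cannot be an induced subgraph of any $k$-vertex-critical $\mathcal{H}$-free graph. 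Representative examples that would have to be handled are the rule discarding an added vertex $v$ whose neighbourhood $N(v)\cap V(G_m)$ is dominated by the neighbourhood of an existing nonadjacent vertex (justified by \autoref{lem:dominated subsets}), and the rule discarding extensions that immediately create a forbidden induced subgraph in $\mathcal{H}$ or a clique cutset (justified by \autoref{lem:clique cutsets}). Each rule in \cite{GO18} is accompanied by such a soundness lemma, and invoking these lemmas finishes the induction; the completeness of $\mathcal{F}$ follows. The termination hypothesis is used only to ensure that the recursion does in fact reach the level $m=n^\star$ of the tree.
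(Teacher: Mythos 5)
The paper does not prove this statement at all: Theorem~\ref{thm:termination} is imported verbatim from~\cite{GO18} and used as a black box, so there is no in-paper argument to compare yours against. That said, your outline does match the structure of the actual correctness proof in~\cite{GO18}: the easy inclusion by inspection of the single insertion point, and the completeness direction via a reachability invariant maintained through the recursion, with the whole burden falling on the soundness of the pruning rules. Since the paper deliberately treats those rules as a black box, deferring their soundness lemmas to~\cite{GO18} is the only honest option, and your sketch correctly isolates that as the sole nontrivial content.

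Two small points you should tighten. First, \autoref{alg:extend} begins with the test ``is not generated before'', so the literal invariant ``Extend$(G_m)$ is called on an induced subgraph $G_m$ of $G^\star$'' can fail: the call may be short-circuited because an \emph{isomorphic} copy was processed earlier. The invariant must be stated up to isomorphism (some graph isomorphic to an induced subgraph of $G^\star$ containing a copy of $S$ is fully expanded), together with the observation that validity of extensions is isomorphism-invariant. Second, your dichotomy for the pruning rules is not quite the right formulation: rules such as the dominated-neighbourhood rule discard extensions that genuinely are induced subgraphs of $G^\star$ and whose descendants are not reproduced elsewhere; what each soundness lemma actually guarantees is only that \emph{at least one} extension lying inside $G^\star$ survives (e.g., because $k$-vertex-criticality of $G^\star$ forces the existence of a vertex breaking the domination). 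You state this correctly as ``the soundness claim'' at the start of the paragraph, so the later, weaker characterization should simply be dropped.
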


%%%%%%%%%%%%%%%%%%%%%%%%% Main
\begin{algorithm}[H]

\SetAlgoLined

\KwIn{An integer $k$, a set $\mathcal{H}$ of forbidden induced subgraphs, and a graph $S$.}

\KwOut{A list $\mathcal{F}$ of all $k$-vertex-critical $\mathcal{H}$-free graphs containing $S$.}

Let $\mathcal{F}$ be an empty list.

Extend($S$).

Return $\mathcal{F}$.

\medskip
\caption{Generate($k$, $\mathcal{H}$, $S$)} \label{alg:main}
\end{algorithm}

%%%%%%%%%%%%%%%%%%%%%%%%% Recursive
\begin{algorithm}[H]

\SetAlgoLined

\If{{\em $G$ is $\mathcal{H}$-free and is not generated before}}
{
	\If{$\chi(G)\ge k$}
	{
		\If{{\em $G$ is $k$-vertex-critical}}	{add $G$ to $\mathcal{F}$}
	}
	\Else
	{
		\For{{\em each {\color{blue} valid} 1-vertex extension $G'$ of $G$}}
		{
			Extend($G'$)
		}
	}
}

\medskip
\caption{Extend($G$)} \label{alg:extend}
\end{algorithm}

Let $F$ be the graph obtained from a 5-hole by adding a new vertex and making it adjacent to four vertices on the hole.

\begin{lemma}\label{lem:W5F}
Let $G$ be a 5-vertex-critical $(P_5,K_4)$-free graph. If $G$ contains an induced $W_5$ or $F$, then 
 Then $G$ is isomorphic to either $G_1$ or $G_2$.
\end{lemma}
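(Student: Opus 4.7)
My approach is to apply the computer-generation framework of \autoref{alg:main} (with the pruning rules of Goedgebeur and Schaudt built into the valid-extension check on line~8 of \autoref{alg:extend}) twice: once with seed $S = W_5$ and once with seed $S = F$, each call with $k = 5$ and $\mathcal{H} = \{P_5, K_4\}$. By \autoref{thm:termination}, any terminating call returns exactly the set of 5-vertex-critical $(P_5, K_4)$-free graphs containing its seed as an induced subgraph. Hence the lemma reduces to checking that the union of the two output lists is $\{G_1, G_2\}$ up to isomorphism.

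Before running the generator, I would record that $G_1$ and $G_2$ are 5-vertex-critical, $(P_5, K_4)$-free, and each contains an induced $W_5$: in $G_1$ the vertex $12$ is complete to the 5-hole $Q = \{3, 4, 5, 6, 7\}$, and in $G_2$ the vertex $12$ is complete to the 5-hole $\{0, 1, 2, 3, 4\}$. So both graphs are guaranteed to appear in the output of the $S = W_5$ run, giving a lower bound on the list. The $S = F$ run is then needed only to cover those critical graphs that contain $F$ but not $W_5$; by the same theorem, if its output is a subset of $\{G_1, G_2\}$ the argument is complete.

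The main obstacle is termination and controlling the size of the search tree. A naive extension strategy does not terminate, since one can always attach a twin vertex and continue. The pruning rules of \cite{GO18}, which generalize \autoref{lem:dominated subsets} by discarding dominated, twinned, and otherwise subsumed extensions, are invoked as a black box to cut the recursion down to a finite search. On our seeded inputs the $K_4$-free constraint is especially restrictive, because the neighborhood of every vertex must be triangle-free, which sharply limits how a new vertex can attach to the hub and rim of a $W_5$ or $F$; I expect this to keep the branching manageable.

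Finally, once the two runs halt, the output list must be checked, up to isomorphism, against the adjacency lists of $G_1$ and $G_2$ given in the Appendix; a canonical-form comparison suffices. Provided both runs terminate and yield only (copies of) $G_1$ and $G_2$, the conclusion follows directly from \autoref{thm:termination}.
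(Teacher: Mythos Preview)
Your proposal is correct and follows essentially the same approach as the paper: run \autoref{alg:main} with $k=5$, $\mathcal{H}=\{P_5,K_4\}$, and seeds $S=W_5$ and $S=F$, then invoke \autoref{thm:termination} to conclude that the resulting output lists (namely $\{G_1,G_2\}$ for $W_5$ and $\{G_2\}$ for $F$) are exactly the desired critical graphs. Your additional remarks---verifying that $G_1$ and $G_2$ each contain an induced $W_5$, and explaining why the $K_4$-free constraint helps termination---are sound but not strictly needed for the argument.
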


\begin{proof}
We run \autoref{alg:main} with the following inputs:
\begin{itemize}
\item $k=5$;
\item $\mathcal{H}=\{P_5,K_4\}$;
\item $S=W_5$ or $S=F$.
\end{itemize}
If $S=W_5$, then the algorithm terminates with the graphs $G_1$ and $G_2$,
and if $S=F$, then it terminates with only the graph $G_2$. The correctness of the algorithm follows from \autoref{thm:termination}.
\end{proof}

\begin{lemma}\label{lem:7-antihole-free}
Let $G$ be a 5-vertex-critical $(P_5,K_4)$-free graph. If $G$ is 7-antihole-free, then $G$ is isomorphic to $G_1$.
\end{lemma}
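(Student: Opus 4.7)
The plan is to reduce to Lemma~\ref{lem:W5F} by showing that $G$ must contain an induced $W_5$ or $F$, and then to use the 7-antihole-free hypothesis to eliminate $G_2$ from the conclusion of that lemma.

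First, I would show that $G$ contains an induced $C_5$. Since $G$ is $K_4$-free, $\omega(G) \le 3 < 5 = \chi(G)$, so the theorem cited just before Lemma~\ref{lem:W5F} (every $K_4$-free graph with no odd hole is $4$-colorable) forces $G$ to contain an induced odd hole. Every induced cycle in a $P_5$-free graph has length at most $5$, so this odd hole is a $C_5$.

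Second, I would show that $G$ contains an induced $W_5$ or $F$. Fix a $5$-hole $C = v_1 v_2 v_3 v_4 v_5$ and classify each vertex $w \notin C$ by $N(w) \cap C$. A short case analysis using $P_5$-freeness (applied to paths that start at $w$ and walk along $C$) rules out $|N(w) \cap C| = 1$ and the case of two neighbors that are adjacent on $C$; this leaves only the types $\emptyset$, $\{v_i, v_{i+2}\}$, $\{v_i, v_{i+1}, v_{i+2}\}$, $\{v_i, v_{i+1}, v_{i+3}\}$, four-of-five (which would give $F$), and all five (which would give $W_5$). Assuming for contradiction that $G$ contains neither $W_5$ nor $F$, only the first four types occur. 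I would then combine $K_4$-freeness, the 7-antihole-free hypothesis, and vertex-criticality (Lemmas~\ref{lem:clique cutsets} and~\ref{lem:dominated subsets}) to derive a contradiction, most plausibly by exhibiting seven vertices of $G$ that induce a $\overline{C_7}$ built from $C$ together with a few external vertices of suitably chosen types. Once an induced $W_5$ or $F$ has been located, Lemma~\ref{lem:W5F} gives $G \in \{G_1, G_2\}$.

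Finally, I would rule out $G_2$ by exhibiting seven vertices of $G_2$ that induce a $\overline{C_7}$, using the adjacency list in the Appendix, and by verifying that $G_1$ contains no induced $\overline{C_7}$, either by inspection or by a short exhaustive check over $\binom{13}{7}$ subsets. This yields $G \cong G_1$.

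The main obstacle will be the structural step that produces $W_5$ or $F$ on a given $5$-hole. The restricted attachment types allowed by $P_5$-freeness still leave several possibilities, and the case analysis needed to combine them into an induced $\overline{C_7}$ can proliferate; the 7-antihole-free hypothesis, $K_4$-freeness, and vertex-criticality all have to interact in a nontrivial way. An attractive alternative, consistent with the paper's own methodology for Lemma~\ref{lem:W5F}, is to bypass the structural analysis entirely by rerunning Algorithm~\ref{alg:main} with $k = 5$, $\mathcal{H} = \{P_5, K_4, \overline{C_7}\}$, and seed $S = C_5$; by Theorem~\ref{thm:termination} a terminating run would directly enumerate all candidates and, presumably, return only $G_1$.
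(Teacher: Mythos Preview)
Your fallback in the final paragraph is exactly the paper's proof: observe that $G$ must contain a $C_5$ (via the cited theorem on $K_4$-free graphs with no odd hole, together with $P_5$-freeness), then run \autoref{alg:main} with $k=5$, $\mathcal{H}=\{P_5,K_4,\overline{C_7}\}$, and seed $S=C_5$; the algorithm terminates with $G_1$ alone, and \autoref{thm:termination} certifies correctness. So you have found the intended argument, but only as an afterthought.

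Your primary plan, however, has a genuine gap at Step~2. What you would need there is that every $5$-vertex-critical $(P_5,K_4,W_5,F,\overline{C_7})$-free graph is impossible---equivalently, that every $(P_5,K_4,W_5,F,\overline{C_7})$-free graph is $4$-colorable. You offer no argument beyond the hope of ``exhibiting seven vertices of $G$ that induce a $\overline{C_7}$,'' which of course cannot succeed under the $\overline{C_7}$-free hypothesis; the contradiction would have to come instead from producing a $4$-coloring or violating vertex-criticality, and you give no mechanism for either. Note that \autoref{lem:7-antihole} does the \emph{complementary} work (it handles the case where a $\overline{C_7}$ is present), so it cannot be recycled here. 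The statement you need is true a posteriori, as a corollary of \autoref{thm:K4}, but the paper never proves it structurally, and nothing in your outline suggests a clean route. The attachment-type classification you sketch is correct, but the remaining case analysis is precisely what the computer search in the paper's proof replaces.

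Steps~1, 3, and~4 are fine. For Step~4, the set $\{5,6,9,10,11,12,13\}$ induces a $\overline{C_7}$ in $G_2$ (the complement cycle is $5\text{--}10\text{--}11\text{--}9\text{--}6\text{--}12\text{--}13\text{--}5$), and $G_1$ is indeed $\overline{C_7}$-free, so the elimination would go through once Step~2 is in hand.
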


\begin{proof}
By \autoref{thm:K4oddhole}, $G$ must contain a 5-hole. We run \autoref{alg:main} with the following inputs:
\begin{itemize}
\item $k=5$;
\item $\mathcal{H}=\{P_5,K_4,\overline{C_7}\}$;
\item $S=C_5$
\end{itemize}
The algorithm terminates and outputs $G_1$ as the only critical graph. The correctness of the algorithm follows from \autoref{thm:termination}.
\end{proof}

\begin{lemma}\label{lem:7-antihole}
Let $G$ be a $(P_5,K_4,W_5,F)$-free graph. If $G$ contains an 7-antihole, then $G$ is 4-colorable.
\end{lemma}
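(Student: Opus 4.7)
Let $C=\{v_0,\ldots,v_6\}$ be the given $\overline{C_7}$, indexed so that the non-edges within $C$ are precisely the pairs $v_iv_{i+1}$ (indices mod~$7$). Use the canonical proper $4$-colouring $\phi$ of $G[C]$ defined by $\phi(v_0)=\phi(v_1)=1$, $\phi(v_2)=\phi(v_3)=2$, $\phi(v_4)=\phi(v_5)=3$, and $\phi(v_6)=4$. For every $u\in V(G)\setminus C$, set $S(u):=N(u)\cap C$ and $L(u):=\{1,2,3,4\}\setminus \phi(S(u))$. The plan is first to show $L(u)\neq \emptyset$ for every such $u$, and then to extend $\phi$ to an $L$-colouring of $V(G)\setminus C$, giving a proper $4$-colouring of~$G$.

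The core of the argument is a symmetry-aware classification of $S(u)$, obtained by applying the forbidden subgraphs to $G[C\cup\{u\}]$. Because $\alpha(C_7)=3$, any five vertices of $C$ contain an $\overline{C_7}$-triangle, and together with $u$ such a triangle would be a $K_4$; hence $|S(u)|\leq 4$. If $S(u)=\{v_0\}$ (without loss of generality), then $u,v_0,v_5,v_1,v_6$ induces a $P_5$, so $|S(u)|\neq 1$. If $|S(u)|=2$ with $S(u)=\{v_0,v_1\}$ (the only $|S|=2$ case that is a non-edge of $\overline{C_7}$), then $\{v_0,u,v_1,v_6,v_2\}$ induces a $C_5$, while $v_4$ is adjacent to each of $v_0,v_1,v_2,v_6$ but not to $u$, so the six vertices $\{v_0,u,v_1,v_6,v_2,v_4\}$ induce $F$; consequently $S(u)$ must be an edge of $\overline{C_7}$. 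Finally, a direct enumeration shows that the only triangle-free $4$-subsets of $C$ whose images under $\phi$ cover all of $\{1,2,3,4\}$ are $\{v_0,v_3,v_4,v_6\}$ and its image under the reflection of $\overline{C_7}$ fixing $v_6$; for $S(u)=\{v_0,v_3,v_4,v_6\}$, the set $\{v_0,u,v_6,v_1,v_5\}$ induces a $C_5$ each of whose vertices is adjacent to $v_3$, yielding $W_5$. Hence for $|S(u)|=4$ the colour set $\phi(S(u))$ always omits some colour, and $L(u)\neq \emptyset$ in every case.

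The main obstacle is the extension of $\phi$ to $V(G)\setminus C$. Some outside vertex $u$ may have $|L(u)|=1$, so its colour is forced, and I must show that the set of outside vertices sharing a given forced colour is independent in $G$. The planned tool is again the forbidden subgraphs $P_5, W_5, F$, now applied to $G[C\cup\{u,u'\}]$ for pairs $u,u'$ of outside vertices that would share the same forced colour: for each pair of neighbourhood types $(S(u),S(u'))$ compatible with that forced colour, an adjacency $uu'\in E(G)$ should force one of these induced subgraphs inside $C\cup\{u,u'\}$, contradicting the hypothesis. Once the forced-colour classes are shown to be independent, the remaining outside vertices have $|L(u)|\geq 2$, and exhaustive propagation in the sense of \autoref{sec:pre} should complete the $L$-colouring.
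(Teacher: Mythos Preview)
Your opening move---fixing a $4$-colouring of the $7$-antihole and showing every outside vertex has a nonempty list---is the same starting point as the paper's proof, and your analysis for $|S(u)|\in\{1,4\}$ and for the non-edge case of $|S(u)|=2$ is essentially correct. However, the proposal has two real gaps.

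First, the classification of $S(u)$ is incomplete. You stop once $L(u)\neq\emptyset$ is secured, so you never exclude $|S(u)|=0$, $|S(u)|=2$ with $S(u)$ an edge of $\overline{C_7}$, or $3$- and $4$-subsets that are not consecutive along the $C_7$-ordering. The paper (Claim~\ref{clm:vertex type}) shows that \emph{every} outside vertex lies in some $T_i$ (three consecutive vertices) or $F_i$ (four consecutive vertices); without this, the number of neighbourhood types explodes and the pairwise case analysis you propose for adjacent outside vertices becomes unmanageable.

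Second, and more seriously, the extension step is not a proof. Showing that vertices whose colour is \emph{initially} forced form independent sets is necessary but far from sufficient: after you colour those, the remaining vertices still carry lists of size~$2$, and ``exhaustive propagation should complete the $L$-colouring'' is an assertion, not an argument. Propagation can terminate with many vertices still having two-element lists, and the residual instance need not be trivially colourable. The paper handles this with substantial extra structure: Claims~\ref{clm:twin vs twin}--\ref{clm:FtwoT} pin down all adjacencies among the $T_i$ and $F_j$; Claim~\ref{clm:dominated vertex} is used to delete three of the $T_i$ outright (Claim~\ref{clm:eliminate three T}); a first propagation reduces to the auxiliary instance $(G',L')$ of Figure~\ref{fig:list coloring instance}; a second propagation from $F_3,F_4,F_5,F_6$ uses Claims~\ref{clm:TtwoF} and~\ref{clm:FtwoT} to peel off further forced vertices; and the proof ends with an \emph{explicit} colouring rule for the residual graph $G''$, which is then verified by hand. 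None of this is captured by ``forced-colour classes are independent, then propagate''. To make your approach work you would need the full $T_i/F_i$ classification and an explicit colouring of the residual instance, which is exactly what the paper supplies.
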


\begin{proof}
Let $C=v_1,v_2,\ldots,v_7$ be a 7-antihole with $v_iv_{i+1}$ being a nonedge. 
For each $1\le i\le 7$, 
let $T_i$ be the set of vertices in $V\setminus V(C)$ that are adjacent to $v_{i-1},v_i,v_{i+1}$,
and $F_i$ be the set of vertices in $V\setminus V(C)$ that are adjacent to $V(C)\setminus \{v_{i-1},v_i,v_{i+1}\}$.

\begin{claim}\label{clm:vertex type}
$V\setminus V(C)=\cup_{1\le i\le 7}(F_i\cup T_i)$.
\end{claim}

\begin{proof}
Let $x\in V\setminus V(C)$ that has at least one neighbor in $C$. Since $G$ is $K_4$-free, $x$ has at most four neighbors on $C$.
Suppose first that $x$ has at most two neighbors on $C$. 
If $x$ is adjacent to $v_4$ and $v_5$, then $\{v_3,v_4,v_5,v_6,x\}$ induces a 5-hole and $v_1$ is adjacent to four vertices on the hole.
This contradicts that $G$ is $F$-free.
So $x$ cannot be adjacent only to $v_i$ and $v_{i+1}$ for some $i$.
Thus, we may assume by symmetry that $x$ is adjacent to $v_1$ and possibly to $v_{3}$ or $v_4$ (but not both). Then
$x,v_1,v_6,v_2,v_7$ is an induced $P_5$, a contradiction.
Now suppose that $x$ has three neighbors on $C$. 
Since $G$ is $K_4$-free, $x$ has at least two consecutive neighbors, say $v_1,v_2$ by symmetry. 
If $x$ is adjacent to $v_3$ or $v_7$, then $x\in T_1$ or $x\in T_2$. So $x$ is not adjacent to $v_3$ or $v_7$.
Since $x$ is adjacent to only one vertex in $\{v_4,v_5,v_6\}$, $G[\{v_3,v_4,v_5,v_6,v_7,x\}]$ contains an induced $P_5$, a contradiction.
Now suppose that $x$ has four neighbors on $C$. Then $x$ must have two consecutive neighbors, say $v_1,v_2$ by symmetry. 
If $x$ does not have three consecutive neighbors, then $x$ is not adjacent to $v_3$ or $v_7$. Then $\{v_7,v_1,v_2,v_3,x\}$ induces a $C_5$.
Since $G$ is $W_5$-free, $x$ is not adjacent to $v_5$, and so is adjacent to $v_4$ and $v_6$. But then $\{v_1,v_4,v_6,x\}$ induces a $K_4$.
Thus, $x$ is adjacent to $v_3$ or $v_7$, say $v_3$ by symmetry. If $x$ is adjacent to $v_7$ or $v_4$, then $x\in F_5$ or $x\in F_6$.
Otherwise $x$ is adjacent to exactly one of $v_5$ or $v_6$. But then $\{v_4,v_5,v_6,v_7,x\}$ induces a $P_5$. 

Now let $z\in V\setminus V(C)$ that has no neighbor in $C$. Since $G$ is connected and $P_5$-free, $z$ has a neighbor in $T$ or $F$.
If $z$ is adjacent to $t_1\in T_1$, then $z,t_1,v_2,v_5,v_3$ is an induced $P_5$. If $t_1$ is adjacent to $f_1\in F_1$,
then $z,f_1,v_3,v_7,v_2$ is an induced $P_5$. So there is no such vertex.  This proves the claim. 
\end{proof}

Note that since $G$ is $K_4$-free,  $F_i$ and $T_i$ are independent sets for each $1\le i\le 7$.
We now investigate the adjacency among $T_i$ and $F_i$ for $1\le i\le 7$.
\begin{claim}\label{clm:twin vs twin}
For each $i$, $T_i$ is anticomplete to $T_{i+1}$, and is complete to $T_{i+3}$.
\end{claim}

\begin{proof}
By symmetry, it suffices to prove the claim for $i=1$. Let $t_1\in T_1$. If $t_1$ is adjacent to $t_2\in T_2$,
then $t_2,t_1,v_7,v_4,v_6$ is an induced $P_5$. If $t_1$ is not adjacent to $t_3\in T_3$, then $\{v_2,t_3,v_3,t_1,v_1,\}$
If $t_1$ is not adjacent to $t_4\in T_4$, then $t_1,v_2,v_6,v_3,t_4$ is an induced $P_5$.
\end{proof}

\begin{claim}\label{clm:twin vs clone}
For each $i$, $F_i$ is complete to $T_{i-1}\cup T_i\cup T_{i+1}$, and anticomplete to $T_{i+3}$.
\end{claim}

\begin{proof}
Let $f\in F_1$.  Note that $C'=V(C)\setminus \{v_1\}\cup \{f\}$ induces a 7-antihole, where $f$ plays the role of $v_1$.
If $t_1\in T_1$ is not adjacent to $f$, then it is adjacent to two nonconsecutive vertices on $C'$, contradicting \autoref{clm:vertex type}. 
If $t\in T_2\cup T_7$ is not adjacent to $f$, then $t$ is adjacent to exactly two consecutive vertices on $C'$ , 
contradicting \autoref{clm:vertex type}.
If $t\in T_4$ is adjacent to $f$, the neighbors of $t$ on $C'$ are not consecutive, contradicting \autoref{clm:vertex type}. 
This proves the claim.
\end{proof}

\begin{claim}\label{clm:clone vs clone}
For each $i$, $F_i$ is anticomplete to $F_{i+1}$, and complete to $F_{i+3}$.
\end{claim}

\begin{proof}
It suffices to prove for $i=1$. Let $f\in F_1$. If $f$ is adjacent to $f'\in F_2$, then $\{f,f',v_4,v_6\}$ induces a $K_4$.
If $f$ is not adjacent to $f'\in F_4$, then the neighbors of $f'$ on $C'=V(C)\setminus \{v_1\}\cup \{f\}$ are not consecutive, 
contradicting \autoref{clm:vertex type}. 
This proves the claim.
\end{proof}

\begin{claim}\label{clm:dominated vertex}
For each $t\in T_i$, $N(t)\subseteq N(v_{i-3})\cup N(v_{i+3})$.
\end{claim}

\begin{proof}
We prove for $i=1$. Let $x$ be a common nonneighbor of $v_4$ and $v_5$.
If $x=v_4$ or $v_5$, then $x\notin N(t)$ by the definition of $t$.
If $x\in T$, then $x\in T_7\cup T_1\cup T_2$, and $x\notin N(t)$ by \autoref{clm:twin vs twin}.
If $x\in F$, then $x\in F_4\cup F_5$, and $x\notin N(t)$ by \autoref{clm:twin vs clone}.
This proves the claim.
\end{proof}

Let $L$ be the 4-list assignment of $G$ such that 

\begin{itemize}
\item $L(v_1)=\{1\}$, $L(v_2)=L(v_3)=\{2\}$, $L(v_4)=L(v_5)=\{3\}$, and $L(v_6)=L(v_7)=\{4\}$,
\item and $L(v)=[k]$ for every $v\in V\setminus V(C)$.
\end{itemize}

\begin{claim}\label{clm:eliminate three T}
$G$ is $L$-colorable if and only if $G-(T_{6}\cup T_1\cup T_{3})$ is $L$-colorable.
\end{claim}

\begin{proof}
Suppose that $G-(T_{6}\cup T_1\cup T_{3})$ has an $L$-coloring $\phi$.
We color every vertex in $T_1$ with color 3, color every vertex in $T_3$ with color 4,  and color eevery vertex in $T_6$ with color 2.
This extended coloring is an $L$-coloring of $G$ by \autoref{clm:dominated vertex}.
\end{proof}

We now prove that $G$ is $L$-colorable, which implies that $G$ is 4-colorable. 
By \autoref{clm:eliminate three T}, it suffices to show that $G-(T_6\cup T_1\cup T_3)$ is $L$-colorable.
We shall do this in a number of steps.

\medskip
\noindent {\bf The first step: propagate from $C$.} We propagate from $v_1,\ldots,v_7$.

\begin{itemize}
\item The list of every vertex in $F_1,F_3,F_4,F_5,F_6$ is $\{1\},\{2\},\{3\},\{3\},\{4\}$ respectively in this order.
Every vertex in $F_2$ has list $\{1,2\}$ and every vertex in $F_7$ has list $\{1,4\}$.
\item Every vertex in $T_2$ has list $\{3,4\}$, every vertex in $T_4$ has list $\{1,4\}$, every vertex in $T_5$ has list $\{1,2\}$,
and every vertex in $T_7$ has list $\{2,3\}$.
\end{itemize}

Let $G'$ be the subgraph of $G$ with list assignment $L'$ described in \autoref{fig:list coloring instance}.
Note that $G'$ is not an induced subgraph of $G$.
It follows from Claims \ref{clm:twin vs twin}-\ref{clm:clone vs clone} that $G$ is $L$-colorable 
if and only if  $G'$ is $L'$-colorable. (Some vertex subsets such as $F_1$ and edges such as those between $T_2$ and $F_2$
are irrelevant in terms of coloring because of either disjoint lists or nonadjacency between vertices.)

\begin{figure}[t]
\centering

\begin{tikzpicture}[scale=0.8]
\tikzstyle{vertex}=[draw, circle, fill=white!100, minimum width=4pt,inner sep=2pt]
%\draw[step=1cm,color=gray] (-6,-6) grid (6,6);

\node[vertex] (T2) at (4,2) {$T_2:3,4$};
\node[vertex] (T4) at (4,-2) {$T_4:1,4$};
\node[vertex] (T5) at (-4,-2) {$T_5:1,2$};
\node[vertex] (T7) at (-4,2) {$T_7:2,3$};
\node[vertex] (F7) at  (-2,0) {$F_7:1,4$};
\node[vertex] (F2) at  (2,0) {$F_2:1,2$};
\node[vertex] (F3) at  (-4,-5) {$F_3:2$};
\node[vertex] (F6) at  (4,-5) {$F_6:4$};
\node[vertex] (F5) at  (-6,0) {$F_5:3$};
\node[vertex] (F4) at  (6,0) {$F_4:3$};

\draw (F5)--(T7)--(T2)--(F4);
\draw (F7)--(F2);
\draw (F3)--(T5)--(T7);
\draw (F6)--(T4)--(T2);
\draw (T7)--(F2)--(T4);
\draw (T2)--(F7)--(T5);

\end{tikzpicture}

\caption{The instance $(G',L')$.  A line between any two sets means that the edges between 
the two sets are arbitrary. No line means that edges are irrelevant in terms of coloring.}\label{fig:list coloring instance}

\end{figure}
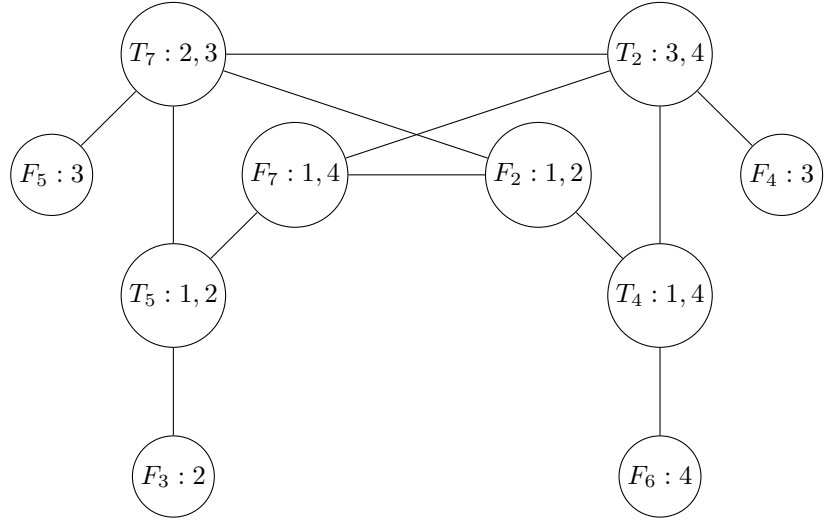

\medskip
\noindent {\bf The second step: propagate exhaustively from $F_3,F_4,F_5,F_6$.} 
We propagate the coloring from all vertices in $F_3\cup F_4\cup F_5\cup F_6$ exhaustively.

%We now propagate the coloring of $X$ in the following way to further reduce to a list 4-coloring instance on a smaller subgraph of $G$.
Let $T'_7=N(F_5)\cap T_7$,  $T'_5=N(T'_7\cup F_3)\cap T_5$, and $F'_7=N(T'_5)\cap F_7$. 
Since every vertex in $F_5$ has list $\{3\}$, every vertex in $T'_7$ is must be colored with 2 in any $L'$-coloring.
Similarly, every vertex in $T'_5$ and in $F'_7$ must be colored with 1 and 4, respectively. 
Symmetrically, let $T'_2=N(F_4)\cap T_2$,  $T'_4=N(T'_2\cup F_6)\cap T_4$, and $F'_2=N(T'_4)\cap F_2$. 
Then every vertex in $T'_2$ must be colored with 4, every vertex in $T'_4$ must be colored with 1, and 
every vertex in $F'_2$ must be colored with 2. 
Let $L''$ denote the resulting list assignment.
For every set $S\in \{T_2,T_4,T_5,T_7,F_2,F_7\}$, let $S''=S\setminus S'$.
Let $G''=G[T''_2\cup T''_4\cup T''_5\cup T''_7\cup F''_2\cup F''_7]$.

\begin{claim}\label{clm:TtwoF}
For each $i$, every vertex in $T_i$ is anticomplete to either $F_{i-2}$ or $F_{i+2}$ 
\end{claim}

\begin{proof}
If $t_1\in T_1$ has a neighbor $f_3\in F_3$ and a neighbor $f_6\in F_6$, then $\{t_1,f_3,f_6,v_1\}$ induces a $K_4$.
\end{proof}

\begin{claim}\label{clm:FtwoT}
For each $i$, every vertex in $F_i$ is anticomplete to either $T_{i-2}$ or $T_{i+2}$.
\end{claim}

\begin{proof}
Suppose that $f_1\in F_1$  has a neighbor $t_3\in T_3$ and a neighbor $t_6\in T_6$. 
Then $Q=v_1,v_4,t_3,t_6,v_5$ is a 5-hole with $f_1$ having four neighbors on $Q$.
This contradicts that $G$ is $F$-free.
\end{proof}

By \autoref{clm:TtwoF}, $T'_7$ and $T'_2$ are anticomplete to $F_2$ and $F_7$, respectively.
By \autoref{clm:FtwoT}, $F'_7$ and $F'_2$ are anticomplete to $T_2$ and $T_7$, respectively.
Therefore, $G'$ is $L'$-colorable if and only if $G''$ is $L''$-colorable.

\medskip
\noindent {\bf The final step: color $G''$.} 
We finish the proof by giving an $L''$-coloring of $G''$.

\begin{itemize}
\item Color every vertex in $F''_7$ with color $4$ and every vertex in $F''_2$ with color 1.
\item Assign color 4 to those vertices in $T''_4$ that are neighbors of $F''_2$, and assign color 1 to the remaining vertices in $T''_4$.
\item Assign color 3 to those vertices in $T''_2$ that are neighbors of $F''_7$ or neighbors of vertices in $T''_4$ with color 4,
and assign color 4 to the remaining vertices in $T''_2$.
\item Assign color 2 to those vertices in $T''_7$ that are neighbors of $T''_2$ with color 3, and assign color 3 to the remaining vertices in $T''_7$.
\item Assign color 1  to those vertices in $T''_5$ that are neighbors of $T''_7$ with color 2, and assign color 2 to the remaining vertices in $T''_5$.
\end{itemize}
It is routine to verify that the assignment is an $L''$-coloring of $G''$. This completes the proof.
\end{proof}

We are now ready to prove \autoref{thm:K4}.
\begin{proof}[Proof of \autoref{thm:K4}]
Let $G$ be a 5-vertex-critical $(P_5,K_4)$-free graph. If $G$ contains an induced $W_5$ or $F$, then $G$ is either $G_1$ or $G_2$
by \autoref{lem:W5F}. So we can assume that $G$ is $(W_5,F)$-free as well. By \autoref{lem:7-antihole}, $G$ must be 7-antihole-free,
and so is $G_1$ by \autoref{lem:7-antihole-free}.
\end{proof}

\subsection{$P_1+P_3$-Free Graphs}

\begin{theorem}\label{thm:co-paw}
For every fixed integer $k\ge 1$, there are finitely many $k$-vertex-critical $P_1+P_3$-free graphs.
\end{theorem}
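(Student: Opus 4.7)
My approach exploits a complementation trick: the complement of $P_1+P_3$ is the \emph{paw} (as the paper itself notes in the introduction), so a graph $G$ is $P_1+P_3$-free if and only if $\overline{G}$ is paw-free. By Olariu's classical structure theorem for paw-free graphs, every connected component of a paw-free graph is either triangle-free or a complete multipartite graph. Taking complements, this means that any $P_1+P_3$-free graph $G$ can be written as a join $G_1 \vee G_2 \vee \cdots \vee G_m$ (that is, $V(G_i)$ is complete to $V(G_j)$ for all $i\neq j$), where every factor $G_i$ either satisfies $\alpha(G_i) \leq 2$ (the triangle-free side) or is a disjoint union of cliques (the complete multipartite side).

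Since the chromatic number is additive under joins, $\sum_{i=1}^m \chi(G_i) = \chi(G) = k$. For a factor $G_i$ with $\alpha(G_i) \leq 2$, every color class has size at most $2$, so $|V(G_i)| \leq 2\chi(G_i)$. The heart of the argument is to show that any factor $G_i$ which is a disjoint union of cliques must in fact be a single clique, and hence $|V(G_i)| = \chi(G_i)$. Suppose instead that $G_i$ has at least two distinct clique-components $C$ and $C'$ with $|C| \leq |C'|$. Set $X = V(C)$ and $Y = V(C')$. These sets are nonempty and disjoint; they are anticomplete in $G$ because they lie in different components of $G_i$ and the join structure adds no edges inside $G_i$; we have $\chi(G[X]) = |C| \leq |C'| = \chi(G[Y])$; and $N_G(X) = \bigcup_{j\neq i} V(G_j)$, which is complete to $Y$ by the join structure. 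This configuration directly violates \autoref{lem:dominated subsets}.

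Combining the two cases, $|V(G)| = \sum_{i=1}^m |V(G_i)| \leq 2\sum_{i=1}^m \chi(G_i) = 2k$. Hence every $k$-vertex-critical $P_1+P_3$-free graph has at most $2k$ vertices, which in particular gives finiteness. The main conceptual step I expect to be the crux is identifying the structural dichotomy via Olariu's theorem; once that decomposition is in hand, the criticality argument via \autoref{lem:dominated subsets} is entirely routine, and the bound on $|V(G)|$ follows immediately. If one instead wanted to avoid citing Olariu's theorem, the obstacle would shift to reproving paw-free structure, which the paper's own Ramsey-plus-fixed-sets toolkit could presumably be used for, but at the cost of a longer and less transparent argument.
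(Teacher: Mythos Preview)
Your proof is correct and takes a genuinely different—and considerably shorter—route than the paper's. You exploit the complementation to paw-free graphs and Olariu's structure theorem to obtain a join decomposition $G=G_1\vee\cdots\vee G_m$; the paper instead fixes a maximal clique $K=\{v_1,\dots,v_t\}$, partitions $V\setminus K$ into layers $F_1,\dots,F_t$ of non-neighbors, and bounds each $F_i$ inductively using a comparability-graph construction together with the dual Dilworth theorem (\autoref{thm:Dual Dilworth}). Your argument uses only \autoref{lem:dominated subsets} and yields the explicit bound $|V(G)|\le 2k$, which is essentially sharp (witness the odd antiholes $\overline{C_{2k-1}}$), whereas the paper's bound is an unspecified constant arising from iterated pigeonhole of the form $2|T_i|^2$. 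On the other hand, the paper's fixed-set-plus-Dilworth machinery is not wasted effort: it is precisely the template reused in the next subsection for $(P_5,P_2+2P_1)$-free graphs (\autoref{thm:co-diamond}), where no analogue of Olariu's decomposition is available. So your argument is the cleaner proof of \emph{this} theorem, while the paper's longer argument serves as a warm-up for a technique of broader applicability within the paper.
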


\begin{proof}
Let $G$ be a $k$-vertex-critical $P_1+P_3$-free graph. If $G$ contains a $K_{k}$, then $G$ is isomorphic to $K_k$.
So we assume in the following that $G$ is $K_{k}$-free.
Let $K=\{v_1,\ldots,v_t\}$ be a maximal clique, where $1\le t< k$.
Since $K$ is maximal, every vertex in $V\setminus K$ is not adjacent to at least one vertex in $K$.
We partition $V\setminus K$ into the following subsets.

\begin{itemize}
\item $F_1$ is the set of nonneighbors of $v_1$.
\item  For $2\le i\le t$, $F_i$ is the set of nonneighbors of $v_i$ that are not in $F_1\cup \cdots \cup F_{i-1}$.
\end{itemize}
By the definition, $v_i$ is complete to $F_j$ if $i<j$.
Since $G$ is $P_1+P_3$-free,  each $F_i$ is $P_3$-free, and so is a disjoint union of cliques.

\begin{claim}\label{clm:neighbor}
If $F_i$ has at least two components, then every neighbor of $v_i$ is either complete or anticomplete to $F_i$.
\end{claim}

\begin{proof}
Let $v$ be a neighbor of $v_i$. Suppose that $v$ has a neighbor $f$ in $F_i$. 
Let $K$ be the component of $F_i$ containing $f$.
If $v$ is not adjacent to some vertex $f'\in F_1\setminus K$, then $\{f',f,v,v_i\}$ induces a $P_1+P_3$, a contradiction. 
So $v$ is complete to $F_i\setminus K$. Since $F_i$ has at least two components, $v$ has a neighbor in a component other than $K$.
It follows from the same argument that $v$ is complete to $K$. This completes the proof.
\end{proof}

\begin{claim}\label{clm:nonneighbor}
For every nonneighbor $v$ of $v_i$ and every component $K$ of $F_i$, $v$ is either complete or anticomplete to $K$.
\end{claim}

\begin{proof}
If $v$ is mixed on an edge $xy$ in $K$, then $\{v,v_i,x, y\}$ induces a $P_1+P_3$, a contradiction. 
\end{proof}

By \autoref{clm:neighbor} and  \autoref{clm:nonneighbor}, if $F_i$ has at least two components,
every component of $F_i$ is a homogeneous set of $G$. Moreover, since $v_i$ is complete to $F_j$ for $i<j$,
no vertex in $\{v_j\}\cup F_j$ with $j>i$ is mixed on two components of $F_i$.
We next show that each $F_i$ has bounded size.

\begin{claim}\label{clm:F1}
$|F_1|\le k$.
\end{claim}

\begin{proof}
We show that $F_1$ is connected. Suppose not. Let $K$ and $K'$ be two component of $F_1$ with $|K|\le |K'|$. 
Then $N(K)=N(K')$. By \autoref{lem:dominated subsets}, $G$ is not $k$-vertex-critical. This is a contradiction.
Therefore, $F_1$ is a clique and so has at most $k$ vertices.
\end{proof}

\begin{claim}\label{clm:Fi}
For each $1\le i\le t$, $F_i$ has bounded size.
\end{claim}

\begin{proof}
We prove this by induction on $i$. By \autoref{clm:F1}, the statement is true for $i=1$.
Now assume that $i\ge 2$ and $F_j$ has bounded size for each $1\le j<i$. If $F_i$ is connected,
then $|F_i|\le k$ and we are done. So we assume that $F_i$ has at least two components.
We will show that the number of components in $F_i$ is bounded and this will complete the proof.
For this purpose, we construct a graph $X$ as follows.

\begin{itemize}
\item $V(X)$ is the set of all components of $F_i$.
\item Two components $K$ and $K'$ of $F_i$ are connected by an edge in $X$ if and only if $N(K)\subseteq N(K')$ or $N(K')\subseteq N(K)$.
\end{itemize}

Note that $X$ is a comparability graph. Next we show that $\omega(X)\le k$. Suppose that $K_1,\ldots,K_t$ is a maximum clique in $X$
with $t>k$. We may assume that $N(K_1)\subseteq N(K_2)\subseteq \cdots \subseteq N(K_t)$. 
It follows from \autoref{lem:dominated subsets} that $|K_i|>|K_j|$ for $i<j$, i.e., $|K_1|>|K_2|>\cdots >|K_t|\ge 1$.
So $|K_1|\ge k$. This is a contradiction, since $G$ is $K_k$-free. 
This proves that $\omega(X)\le k$.
Since $X$ is perfect by \autoref{thm:Dual Dilworth},
$V(X)$ can be partitioned into at most $k$ independent sets $S_1,\ldots,S_k$. We show that each $S_p$ has bounded size.
Let $K$ and $K'$ be two components in $S_p$. Then there are vertices $x$ and $x'$ such that $x\in N(K)\setminus N(K')$ and
$x'\in N(K')\setminus N(K)$. Note that $x,x'\in T_i=\bigcup_{1\le j<i} F_j\cup \{v_j\}$. If $|S_p|>2|T_i|^2$,
by the pigeonhole principle, there are two pairs $\{K,K'\}$ and $\{L,L'\}$ of components that correspond to the same pair
$\{x,x'\}$ in $T_i$. Then $\{K,x,L,K'\}$ induces a $P_1+P_3$. This shows that each $S_p$ has size at most $2|T_i|^2$,
which is a constant by the inductive hypothesis. Therefore, $X$ has constant number of vertices, i.e., $F_i$ has constant number of
components. This completes the proof.
\end{proof}

By \autoref{clm:F1} and \autoref{clm:Fi}, each $|F_i|\le M$ for some constant $M$ (depending only on $k$).
Therefore, $G$ has bounded size. 
\end{proof}

\subsection{$P_2+2P_1$-Free Graphs}

\begin{theorem}\label{thm:co-diamond}
For every fixed integer $k\ge 1$, there are finitely many $k$-vertex-critical $(P_5,P_2+2P_1)$-free graphs.
\end{theorem}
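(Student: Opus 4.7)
The plan is to combine Ramsey's theorem with a structural analysis of a maximum independent set to show that $|V(G)|$ is bounded by a constant depending only on $k$. If $G$ contains $K_k$ then $G=K_k$, so I would assume $\omega(G)\le k-1$; it then suffices to prove $\alpha(G)\le k-1$, since Ramsey's theorem would immediately give $|V(G)|\le R(k,k)-1$.

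Let $I=\{x_1,\ldots,x_m\}$ be a maximum independent set in $G$. The case $m\le 2$ is immediate (for $k\ge 3$), so I would assume $m\ge 3$. The first step is to exploit $(P_2+2P_1)$-freeness: since $I$ is maximum, every $u\in V\setminus I$ has a neighbor $y\in I$, and if $u$ were nonadjacent to two distinct $x,x'\in I$ then $\{u,y,x,x'\}$ would induce $P_2+2P_1$. Hence every vertex outside $I$ has at most one nonneighbor in $I$, yielding the partition
\[
V\setminus I \;=\; A_\emptyset \;\cup\; A_1 \;\cup\; \cdots \;\cup\; A_m,
\]
where $A_\emptyset$ is the set of vertices adjacent to all of $I$ and $A_j$ is the set of vertices whose unique nonneighbor in $I$ is $x_j$.

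The second step is to exploit $P_5$-freeness to show that $A_j$ is complete to $A_{j'}$ for all $j\ne j'$. If $u\in A_j$, $u'\in A_{j'}$, and $u\not\sim u'$, then, choosing any $\ell\notin\{j,j'\}$ (which is available since $m\ge 3$), I would check that $x_{j'},u,x_\ell,u',x_j$ is an induced $P_5$: every required edge and non-edge follows mechanically from the definitions of $A_j,A_{j'}$ and the independence of $I$. This contradicts $P_5$-freeness.

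The final step bounds $m$ using a clique argument together with \autoref{lem:dominated subsets}. Picking one vertex from each nonempty $A_j$ produces a clique in $G$, so at most $k-1$ of the $A_j$'s are nonempty; if additionally some $A_\ell=\emptyset$, then $x_\ell$ can be appended to such a clique (it is adjacent to each chosen $u_i\in A_{j_i}$ since $\ell\ne j_i$), forcing at most $k-2$ nonempty $A_j$'s in that sub-case. Moreover, at most one $A_j$ can be empty, for otherwise two empty indices $j,j'$ would give $N(x_j)=N(x_{j'})=V\setminus I$ and then $\{x_{j'}\}$ would be complete to $N(\{x_j\})$, contradicting \autoref{lem:dominated subsets}. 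Combining the two cases yields $m\le k-1$, hence $\alpha(G)\le k-1$, and the theorem follows from Ramsey's theorem. The step I expect to be most delicate is identifying the correct induced $P_5$ in the second step; once the complete-bipartite structure among the $A_j$'s is in hand, the clique/dominated-set bookkeeping is routine.
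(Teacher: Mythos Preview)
Your proof is correct and takes a genuinely different, more elementary route than the paper's. The paper fixes a maximal \emph{clique} $K=\{v_1,\ldots,v_t\}$, partitions $V\setminus K$ into the nonneighbor sets $F_i$, observes that each $F_i$ is complete multipartite (being $(P_1+P_2)$-free), and then bounds the parts of each $F_i$ by a downward induction on $i$: a binary ``type'' vector records how a vertex meets the parts of earlier $F_\ell$'s and the individual vertices of later $F_j$'s, and two vertices in the same part with the same type are shown (via a $2P_2$/$P_5$ argument) to have equal neighborhoods, contradicting \autoref{lem:dominated subsets}. You instead fix a maximum \emph{independent set} $I$ and, after the easy observation that each vertex outside $I$ misses at most one vertex of $I$, use one carefully chosen induced $P_5$ to force the classes $A_j$ to be pairwise complete; the clique/empty-set bookkeeping then yields $\alpha(G)\le k-1$ directly, and Ramsey finishes with $|V(G)|\le R(k,k)-1$. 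Your argument is shorter, avoids the inductive type-counting entirely, and gives a much smaller explicit bound; the paper's approach, by contrast, is designed to parallel their $(P_1+P_3)$ proof and to showcase the ``fixed-set'' framework they advertise in the introduction.
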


\begin{proof}
Let $G$ be a $k$-vertex-critical $(P_5,P_2+2P_1)$-free graph. If $G$ contains a $K_{k}$, then $G$ is isomorphic to $K_k$.
So we assume in the following that $G$ is $K_{k}$-free.
Let $K=\{v_1,\ldots,v_t\}$ be a maximal clique, where $1\le t< k$.
Since $K$ is maximal, every vertex in $V\setminus K$ is not adjacent to at least one vertex in $K$.
We partition $V\setminus K$ into the following subsets.

\begin{itemize}
\item $F_1$ is the set of nonneighbors of $v_1$.
\item  For $2\le i\le t$, $F_i$ is the set of nonneighbors of $v_i$ that are not in $F_1\cup \cdots \cup F_{i-1}$.
\end{itemize}
By the definition, $v_i$ is complete to $F_j$ if $i<j$.
Since $G$ is $2P_1+P_2$-free,  each $F_i$ is $P_1+P_2$-free, and so is a complete multipartite graph.
Since $G$ has no $K_k$, each $F_i$ has at most $k$ parts.

\begin{claim}\label{clm:2P2-free}
Let $S$ be a part of $F_i$ and $T$ be a part in $F_j$ with $i<j$. Then $G[S\cup T]$ is a $2P_2$-free graph.
\end{claim}

\begin{proof}
Suppose not. Let $s_1t_1$ and $s_2t_2$ be an induced $2P_2$, where $s_i\in S$ and $t_i\in T$ for $i=1,2$.
Then since $v_i$ is not adjacent to $s_1,s_2$ and is adjacent to $t_1,t_2$, it follows that $s_1,t_1,v_i,t_2,s_2$ induces a $P_5$,
a contradiction.
\end{proof}

\begin{claim}\label{clm:type}
Let $S$ be a part of $F_i$ and $T$ be a part in $F_j$ with $i<j$.  Every vertex in $T$ is adjacent to all but at most one vertex in $S$.
\end{claim}

\begin{proof}
Suppose that $t\in T$ is not adjacent to two vertices $s,s'$ in $S$. Then $\{v_i,s,s',t\}$ induces a $2P_1+P_2$, a contradiction.
\end{proof}

Next we show that each part in $F_i$ has bounded size.
\begin{claim}\label{clm:last}
$F_t$ is an independent set of bounded size.
\end{claim}

\begin{proof}
If $F_t$ has at least two parts, then any two vertices from two different parts and $K\setminus \{v_t\}$ form a clique of  size $|K|+1$,
contradicting the choice of $K$. So $F_i$ is an independent set. By \autoref{clm:type}, each vertex in $F_t$ is adjacent to all but at most
one vertex in any part of $F_i$ with $1\le i\le t-1$. For each part $S$ in $F_1\cup \cdots \cup F_{t-1}$, we introduce a binary variable
$X_S\in \{0,1\}$. If $X_S=0$, it indicates that a vertex in $F_t$ is complete to $S$ while $X_S=1$ indicates that a vertex in $F_t$
is adjacent to all vertices in $S$ except one vertex.
 A {\em type} is a binary vector $(X_S)_{S \text{ is a part of  } F_i \text{ with } i<t}$. Since the number of parts in each $F_i$ is at most $k$,
there are at most $2^{kt}\le 2^{k^2}$ types. If $|F_t|>2^{k^2}$, then there are two vertices $x,y\in F_t$ that have the same type. 
Let us fix a part $S\in F_1\cup \cdots \cup F_{t-1}$. 
If $X_S=0$, then both $x$ and $y$ are complete to $S$. If $X_S=1$, then each of $x$ and $y$ has a unique nonneighbor $x'$ and $y'$ 
in $S$. If $x'\neq y'$, then $\{x,x',y,y'\}$ induces a $2P_2$,  which contradicts \autoref{clm:2P2-free}.
So $x'=y'$ and thus $x$ and $y$ have the same neighbors in $S$.  Since $x$ and $y$ have the same type, $N(x)=N(y)$, contradicting 
\autoref{lem:dominated subsets}.
\end{proof}

\begin{claim}\label{clm:all}
For each $1\le i\le t$, $F_i$ has bounded size. 
\end{claim}

\begin{proof}
The statement is true for $i=t$ by \autoref{clm:last}. Now suppose that $i<t$ and $F_j$ has bounded size for each $i< j\le t$.
For each part $S$ in $F_1\cup \cdots \cup F_{i-1}$, we introduce a binary variable $X_S\in \{0,1\}$. 
Moreover, for each vertex $u$ in $\{v_j\}\cup F_j$ for $j>i$, we introduce a binary variable $X_{\{u\}}\in \{0,1\}$.
The meaning of $X_{\{u\}}$ is to indicate whether a vertex in $F_i$ is a neighbor or a nonneighbor of $u$.
A {\em type} is a binary vector 
\[(X_S)_{S \text{ is a part of  } F_{\ell} \text{ with } \ell<i \text{ or } S=\{u\} \text{ for some vertex } u\in \{v_j\}\cup F_j \text{ with }  j>i}.\]
By the inductive hypothesis, each $F_j$ with $j>i$ has bounded size. Therefore, there is a constant $M$ depending only on $k$
such that the number of types is at most $M$. If a part $T$ in $F_i$ has size larger than $M$, there are two vertices $x,y\in T$
having the same type. Using the exact same argument in \autoref{clm:last}, it follows that $N(x)=N(y)$.
This contradicts \autoref{lem:dominated subsets}.  Therefore, each part in $F_i$ has bounded size and so does $F_i$.
\end{proof}

By \autoref{clm:last} and \autoref{clm:all}, $G$ has bounded size.
\end{proof}

\subsection{Diamond-Free Graphs}

\begin{theorem}\label{thm:diamond}
For every fixed integer $k\ge 1$, there are finitely many $k$-vertex-critical ($P_5$, diamond)-free graphs.
\end{theorem}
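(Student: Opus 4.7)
The plan is to adapt the framework used in \autoref{thm:co-paw} (the $P_1+P_3$-free case), with \emph{neighbors} playing the role that nonneighbors played there; the key starting point is that diamond-freeness forces $N(v)$ to be a disjoint union of cliques for every vertex $v$. Suppose $G$ is a $k$-vertex-critical $(P_5,\text{diamond})$-free graph. If $G$ contains $K_k$ then $G\cong K_k$, so assume $G$ is $K_k$-free and let $K=\{v_1,\ldots,v_t\}$ be a \emph{maximum} clique of $G$, with $t\le k-1$.

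The first structural observation exploits the maximality of $K$ together with the fact that, in a diamond-free graph, the common neighborhood of any edge is a clique: if a vertex $w\in V\setminus K$ had two neighbors $v_i,v_j$ in $K$, then $K\cup\{w\}$ would be a clique of size $t+1$, contradicting maximality. Hence every vertex in $V\setminus K$ has at most one neighbor in $K$, yielding a partition $V\setminus K=A_0\cup A_1\cup\cdots\cup A_t$, where $A_i$ (for $i\ge 1$) consists of the vertices whose unique $K$-neighbor is $v_i$ and $A_0$ consists of the vertices with no $K$-neighbor. Each $A_i$ with $i\ge 1$ is a disjoint union of cliques of size at most $t-1\le k-2$ (being $N(v_i)$ minus the clique component $K\setminus\{v_i\}$); and a companion diamond-freeness argument shows that if $C$ is a component of $A_i$ with $|C|\ge 2$, then every vertex outside $\{v_i\}\cup A_i$ has at most one neighbor in $C$ (else $v_i$, that outside vertex, and two vertices of $C$ induce a diamond).

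To bound the number of components of each $A_i$, I would mimic the Dual Dilworth argument of \autoref{thm:co-paw}. Endow the set of components of $A_i$ with the strict partial order $C\prec C'$ iff $C\neq C'$ and $C'$ is complete to $N(C)$; \autoref{lem:dominated subsets} then forces strictly decreasing component sizes along any chain, so the clique number of the associated comparability graph is at most $k-2$, and by \autoref{thm:Dual Dilworth} the components split into at most $k-2$ antichains. Each antichain is bounded by a pigeonhole argument over ``witness pairs'' ranging in the remaining $A_j$'s and $A_0$, where $P_5$-freeness rules out the induced $P_5$'s that would arise when two antichain components share their witness, and \autoref{lem:dominated subsets} kills the surviving duplicates. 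For $|A_0|$, I would use that for every $w\in A_0$ with a neighbor $u\in A_i$ the sequence $w$-$u$-$v_i$-$v_j$ is an induced $P_4$; $P_5$-freeness then forces $A_j\subseteq N(u)\cup N(w)$ for each $j\neq i$, so that $w$'s ``type'' (its neighborhood within $\bigcup_i A_i$) is constrained to a bounded family, and \autoref{lem:dominated subsets} applied to two vertices of the same type bounds $|A_0|$. A combined induction on $i$ (and $|A_0|$ at the end) threads these bounds together.

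The main obstacle is executing the antichain bound in the component analysis of $A_i$: unlike in the $P_1+P_3$-free case, an external vertex is never complete to a component of $A_i$ of size $\ge 2$ (by diamond-freeness), so \autoref{lem:dominated subsets} cannot be applied at the level of entire components for non-singleton components. One must therefore combine the partial-order argument with a vertex-by-vertex analysis, using $P_5$-freeness and the inductive bounds on the other $A_j$'s and $A_0$ to produce either an induced $P_5$ or a pair of vertices whose neighborhood-containment violates criticality. Threading this delicate combinatorial counting through all the $A_i$'s is the technical heart of the proof; once complete, it yields a constant bound on $|V(G)|$ depending only on $k$.
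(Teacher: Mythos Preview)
Your proposal takes a fundamentally different route from the paper and, as you yourself flag, it has a real gap that is not resolved.

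The paper's proof is short and direct: since $G$ is imperfect and $P_5$-free, \autoref{thm:SPGT} produces an induced $C_5$. One then partitions $V\setminus V(C_5)$ by attachment type to the $5$-cycle into sets $Z$ (no neighbor on $C$), $R_i$ (neighbors $\{v_{i-1},v_{i+1}\}$), and $Y_i$ (neighbors $\{v_{i-2},v_i,v_{i+2}\}$); diamond-freeness rules out all other types. A sequence of local diamond/$P_5$ arguments plus \autoref{lem:clique cutsets} and \autoref{lem:dominated subsets} gives $|Y_i|\le 1$, $|R_i|\le 3$, and $|Z|\le 32$, so $|G|\le 57$. No induction, no Dilworth, and the bound is independent of $k$.

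Your clique-based framework is set up correctly (every vertex outside a maximum clique $K$ has at most one neighbour in $K$, each $A_i$ is a disjoint union of small cliques), but the Dual Dilworth step genuinely fails here, and not just in a way that can be patched by ``vertex-by-vertex analysis''. For any component $C$ of $A_i$, \autoref{lem:clique cutsets} forces $N(C)\setminus\{v_i\}\neq\emptyset$; pick $u\in N(C)\setminus\{v_i\}$. Then $u\notin A_i$, hence $u$ is nonadjacent to $v_i$, and the diamond argument you give shows $u$ has at most one neighbour in any other component $C'$ of $A_i$. Consequently $C'$ can be complete to $N(C)$ only when $|C'|=1$. Your partial order $\prec$ is therefore almost empty: chains have length at most $2$ regardless of $k$, and the antichain decomposition gives no nontrivial information. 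You are then back to bounding an arbitrary independent set of components with no structural leverage from $\prec$, and the promised pigeonhole over ``witness pairs'' is not carried out---nor is it clear what witnesses would be, since the relevant $A_j$'s and $A_0$ are not yet bounded at that stage of the induction. The paper sidesteps all of this by anchoring the decomposition on a $C_5$ rather than a clique.
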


\begin{proof}
Let $G$ be a $k$-vertex-critical ($P_5$, diamond)-free graph. We show that $|G|\le \max\{k,57\}$.
If $G$ contains a $K_k$, then $G$ is isomorphic to $K_k$ and thus $|G|\le \max\{k,57\}$.
So assume that $G$ is $K_k$-free. Since $G$ is imperfect, $G$ contains an induced $C_5$ by \autoref{thm:SPGT}.
Let $C=v_1,v_2,v_3,v_4,v_5$ be an induced $C_5$. For each $1\le i\le 5$, we define
\begin{equation*} \label{eq1}
\begin{split}
Z & = \{v\in V\setminus C: N_{C}(v)=\emptyset\}, \\
R_i  & = \{v\in V\setminus C: N_{C}(v)=\{v_{i-1},v_{i+1}\}\}, \\
Y_i  & = \{v\in V\setminus C: N_{C}(v)=\{v_{i-2}, v_i, v_{i+2}\}\}. \\
\end{split}
\end{equation*}
Let $R=\cup_{1\le i\le 5}R_i$ and $Y=\cup_{1\le i\le 5}Y_i$.

\begin{claim}\label{clm:partition}
$V(G)=V(C)\cup Z\cup R\cup Y$.
\end{claim}

\begin{proof}
Let $v\in V(G)\setminus V(C)$. If $v$ has three consecutive neighbors $v_i,v_{i+1},v_{i+2}$ on $C$, then $\{v,v_i,v_{i+1},v_{i+2}\}$
induces a diamond. So if $v$ has at least three neighbors on $C$, $v\in Y$. If $v$ has no neighbors on $C$, then $v\in Z$. 
Now assume that $1\le |N(v)\cap C|\le 2$. 
If $N(v)\subseteq \{v_{i-2},v_{i+2}\}$ for some $i$, say $v$ is adjacent to $v_{i+2}$, then $C\setminus \{v_{i-2}\}\cup \{v\}$
induces a $P_5$. So $v\in C$. This completes the proof.
\end{proof}

We first bound $Y$.

\begin{claim}\label{clm:stable}
Each $R_i$ and $Y_i$ is an independent set.
\end{claim}

\begin{proof}
Suppose that $R_i$ contains two adjacent vertices $x$ and $y$, then $\{x,y,v_{i-1},v_{i+1}\}$ induces a diamond.
The proof for $Y_i$ is the same.
\end{proof}

\begin{claim}\label{clm:Yi}
For each $1\le i\le 5$, $|Y_i|\le 1$.
\end{claim}

\begin{proof}
If $Y_i$ contains two nonadjacent vertices $x,y$, then $\{x,y,v_{i-2},v_{i+2}\}$ induces a diamond. So $Y_i$ is a clique.
By \autoref{clm:stable}, $|Y_i|\le 1$.
\end{proof}

Next we bound $Z$.

\begin{claim}\label{clm:ZR}
$Z$ is anticomplete to $R$.
\end{claim}

\begin{proof}
Let $z\in Z$. If $z$ has a neighbor $r\in R_1$, then $z,r,v_2,v_3,v_4$ induces a $P_5$.
\end{proof}

\begin{claim}\label{clm:ZY}
Each vertex in $Y$ is either complete or anticomplete to a component of $Z$.
\end{claim}

\begin{proof}
Let $y\in Y_1$. If $y$ is mixed on an edge $wz$ in $Z$ with $yw\notin E$ and $yz\in E$, then $w,z,y,v_4,v_5$ induces a $P_5$.
\end{proof}

\begin{claim}
$|Z|\le 32$.
\end{claim}

\begin{proof}
We first show that $Z$ is an independent set. Let $Q$ be any component of $Z$. Then $N(Q)\subseteq Y$ by \autoref{clm:ZR}.
By \autoref{lem:clique cutsets}, $N(Q)$ contains two nonadjacent vertices $y,y'\in Y$. By \autoref{clm:ZY}, $\{y,y'\}$ is complete to $Q$.
Since $G$ is diamond-free, $Q$ is a singleton. This proves that $Z$ is an independent set. If $|Z|>32$, then there are two vertices in $Z$
that have the same neighborhood by \autoref{clm:Yi}. This contradicts \autoref{lem:dominated subsets}.
\end{proof}

Finally, we bound $R$.

\begin{claim}\label{clm:RiRi+1}
$R_i$ and $R_{i+1}$ are complete to each other.
\end{claim}

\begin{proof}
Let $r_3\in R_3$ and $r_4\in R_4$. If $r_3r_4\notin E$, then $r_4,v_5,v_1,v_2,r_3$ induces a $P_5$.
\end{proof}

\begin{claim}\label{clm:RiRi+2}
$G[R_i\cup R_{i+2}]$ contains at most one edge.
\end{claim}

\begin{proof}
By symmetry, we prove for $i=1$. Let $r\in R_1$. If $r$ has two neighbors in $R_3$, then these two vertices together with $v_2,r$
induce a diamond by \autoref{clm:stable}. So every vertex in $R_1$ has at most one neighbor in $R_3$. Similarly, every vertex in $R_3$
has at most one neighbor in $R_1$. If $G[R_1\cup R_3]$ contains two edges $xy$ and $x'y'$ with $x,x'\in R_1$ and $y,y'\in R_3$,
then $y,x,v_5,x',y'$ induce a $P_5$.
\end{proof}

\begin{claim}\label{clm:RiYj}
$R_i$ is complete to $Y_i$ and is anticomplete to $Y_j$ for $j\neq i$.
\end{claim}

\begin{proof}
Let $r\in R_1$. If $r$ is not adjacent to $y\in Y_1$, $y,v_3,v_2,r,v_5$ induces a $P_5$.
If $r$ is adjacent to $y\in Y_2$, then $\{r,y,v_2,v_5\}$ induces a diamond. 
If $r$ is adjacent to $y\in Y_3$, then $\{r,y,v_1,v_5\}$ induces a diamond. 
This completes the proof.
\end{proof}

\begin{claim}\label{clm:R}
For each $1\le i\le 5$, $|R_i|\le 3$.
\end{claim}

\begin{proof}
Suppose that $|R_1|\ge 4$. Then by Claims \ref{clm:RiRi+1}-\ref{clm:RiYj},  there are two vertices
in $R_1$ that have the same neighborhood in $G$. This contradicts \autoref{lem:dominated subsets}.
\end{proof}

By \autoref{clm:partition}, it follows that $|G|=|V(C)|+|Y|+|R|+|Z|\le 5+5+15+32=57$.
This proves the theorem.
\end{proof}

\section{A Complete Classification}\label{sec:classification}

In this section, we prove our main result \autoref{thm:main}.

\begin{proof}[Proof of \autoref{thm:main}]
An infinite family of 5-vertex-critical $2P_2$-free graphs is constructed in~\cite{HMRSV15}. It can be easily checked that
these graphs are $P_1+K_3$-free. Since $2P_2$ and $P_1+K_3$ do not contain any universal vertices, for every fixed $k\ge 6$
one can obtain an infinite family of $k$-vertex-critical $2P_2$-free graphs and $(P_5,P_1+K_3)$-free graphs by 
adding $k-5$ universal vertices to the 5-vertex-critical family in~\cite{HMRSV15}.

Now assume that $H$ is not $2P_2$ or $P_1+K_3$. 
Let $G$ be a $k$-vertex-critical $(P_5,H)$-free graph. We may assume that $G$ is $K_k$-free
for otherwise $G$ is $K_k$.  If $H=4P_1$, then Ramsey's theorem~\cite{Ram30} shows that $|G|\le R(4,k)-1$.
If $H=K_4$, then there are no $k$-vertex-critical $(P_5,K_4)$-free graphs for any $k\ge 6$~\cite{ELMM13}. Moreover,
there are only two 5-vertex-critical $(P_5,K_4)$-free graphs by \autoref{thm:K4}.
If $H$ is a diamond or $P_2+2P_1$, then the finiteness follows from \autoref{thm:diamond} and \autoref{thm:co-diamond}, respectively.
If $H=C_4$, then the finiteness follows from~\cite{HH17}.
If $H=P_4$, then $G$ is perfect and so $(k-1)$-colorable, a contradiction. 
If $H$ is a claw, then the finiteness follows from~\cite{LR03}.
If $H$ is $P_1+P_3$, then the finiteness follows from \autoref{thm:co-paw}.
If $H$ is a paw, then $G$ is either triangle-free or a complete multipartite graph by a result of Olariu~\cite{Ol88}.
In either case, $G$ is $(k-1)$-colorable, a contradiction.
\end{proof}

In view of \autoref{thm:main}, it is natural to ask the following question, which we leave as a possible future direction.

\noindent {\bf Problem.} Which five-vertex graphs $H$ could lead to finitely many $k$-vertex-critical $(P_5,H)$-free
graphs?

As mentioned in the introduction, it was shown in \cite{DHHMMP17} that $H=\overline{P_5}$ is one such graph. 

\section{Appendix}

The source code of \autoref{alg:main} and \autoref{alg:extend} which 
we used in the proofs of \autoref{lem:W5F} and \autoref{lem:7-antihole-free} can be downloaded from~\cite{criticalpfree-site}. 
We refer to~\cite{GO18} for more details on how we verified the correctness of our implementation. 
We executed the program on an Intel i9-9900 CPU at 3.10GHz and in each case the program terminated in a few seconds.

Below we give the adjacency list of the two 5-vertex-critical $(P_5,K_4)$-free graphs $G_1$ and $G_2$ from \autoref{thm:K4oddhole}. They can also be obtained from the database of interesting graphs at the \textit{House of Graphs}~\cite{hog} by searching for the keywords ``5-critical P5K4-free''\footnote{These graphs can also be accessed directly at \url{https://hog.grinvin.org/ViewGraphInfo.action?id=34489} and \url{https://hog.grinvin.org/ViewGraphInfo.action?id=34491}}. 

\begin{itemize}
\item Graph $G_1$: \{0: 1 2 10 12; 1: 0 8 10 12; 2: 0 9 10 12; 3: 4 5 11 12; 4: 3 6 11 12; 5: 3 7 11 12; 6: 4 7 11 12; 7: 5 6 11 12; 8: 1 9 10 12; 9: 2 8 10 12; 10: 0 1 2 8 9 11; 11: 3 4 5 6 7 10; 12: 0 1 2 3 4 5 6 7 8 9\}
\item Graph $G_2$: \{0: 1 2 12 13; 1: 0 3 12 13; 2: 0 4 12 13; 3: 1 4 12 13; 4: 2 3 12 13; 5: 6 7 9 11 12; 6: 5 8 10 11 13; 7: 5 8 9 11 13; 8: 6 7 10 11 12; 9: 5 7 10 12 13; 10: 6 8 9 12 13; 11: 5 6 7 8 12 13; 12: 0 1 2 3 4 5 8 9 10 11; 13: 0 1 2 3 4 6 7 9 10 11\}
\end{itemize}

%\todo{Maybe could have a complete list for $k=5$ for the journal version.}

%\bibliography{C:/Users/Shenwei/Dropbox/bibfile/mybib}
%\bibliographystyle{plain}

\end{document}